\journal{ArXiv}
\begin{document}

\theoremstyle{plain}
\newtheorem{theorem}{Theorem}[section]
\newtheorem{corollary}[theorem]{Corollary}
\newtheorem{lemma}[theorem]{Lemma}
\newtheorem{proposition}[theorem]{Proposition}
\theoremstyle{definition}
\newtheorem{definition}{Definition}
\newtheorem{example}{Example}
\newtheorem{problem}{Problem}
\theoremstyle{remark}
\newtheorem{remark}{Remark}

\def\la{{\langle}}
\def\ra{{\rangle}}
\def\IC{{\mathbb{C}}}
\def\IR{{\mathbb{R}}}
\def\LL{{\mathbb{L}}}
\def\HH{{\mathbb{H}}}
\def\DD{{\mathbb{D}}}
\def\TT{{\mathbb{T}}}
\def\cL{{\cal L}}
\def\cP{{\cal P}}
\def\cB{{\cal B}}
\def\cH{{\cal H}}
\def\cK{{\cal K}}
\def\cC{{\cal C}}
\def\cU{{|cal U}}
\def\cF{{\cal F}}
\def\cF{{\mathcal F}}
\def\cP{{\mathcal P}}
\def\cR{{\mathcal R}}
\def\cS{{\mathcal S}}
\def\cT{{\mathcal T}}
\def\cV{{\mathcal V}}
\def\BH{{\cB(H)}}
\def\bA{{\bf A}}
\def\bB{{\bf B}}
\def\bC{{\bf C}}
\def\bD{{\bf D}}
\def\bF{{\bf F}}
\def\bG{{\bf G}}
\def\bQ{{\bf Q}}
\def\bE{{\bf E}}
\def\bK{{\bf K}}
\def\bM{{\bf M}}
\def\bY{{\bf Y}}
\def\bb{{\bf b}}
\def\ba{{\bf a}}
\def\bc{{\bf c}}
\def\bx{{\bf x}}
\def\bu{{\bf u}}
\def\bL{{\bf L}}
\def\bX{{\bf X}}
\def\b0{{\bf 0}}
\def\tr{{\rm tr}}
\def\Ker{{\rm Ker}}
\def\){{\right)}}
\def\({{\left(}}
\def\Ra{{\ \Rightarrow\ }}
\def\Lra{{\ \Leftrightarrow\ }}
\def\[{\left [}
\def\]{\right ]}
\def\Re{{\rm Re\,}}
\def\diag{{\rm diag}}
\def\rank{{\rm rank}}
\def\span{{\rm span}}
\def\co{{\rm conv}}
\def\){ \right)}
\def\({\left(}
\def\qed{\hfill\vbox{\hrule width 6 pt
\hbox{\vrule height 6 pt width 6 pt}} \medskip}
\def\vec{{\rm vec}}
\def\conv{{\rm con}}
\def\cl{{\bf cl}}

\begin{frontmatter}
\title{Convexity and Star-shapedness of Matricial Range}

\author[polyu]{Pan-Shun Lau}
\ead{panshun.lau@polyu.edu.hk}
\address[polyu]{Department of Applied Mathematics, The Hong Kong Polytechnic University, Hong Kong.}

\author[CWM]{Chi-Kwong Li}
\ead{ckli@math.wm.edu}
\address[CWM]{Department of Mathematics, College of William \& Mary, Williamsburg, VA 23185.}

\author[ISU]{Yiu-Tung Poon}
\ead{ytpoon@iastate.edu}
\address[ISU]{Department of Mathematics, Iowa State University, Ames, IA 50011.\\[20pt] {\bf\normalsize Dedicated to Professor Yik-Hoi Au-Yeung}}

\author[polyu]{Nung-Sing Sze}
\ead{raymond.sze@polyu.edu.hk}



\begin{abstract}
Let ${\bf A} = (A_1, \dots, A_m)$ be an $m$-tuple of bounded linear operators acting on a Hilbert space $\mathcal H$. Their joint $(p,q)$-matricial range $\Lambda_{p,q}({\bf A})$ is the collection of $(B_1, \dots, B_m) \in {\bf M}_q^m$, where $I_p\otimes B_j$ is a compression of $A_j$ on a $pq$-dimensional subspace. This definition covers various kinds of generalized numerical ranges for different values of $p,q,m$. In this paper, it is shown that $\Lambda_{p,q}({\bf A})$ is star-shaped if the dimension of $\mathcal H$ is sufficiently large. If $\dim {\mathcal H}$ is infinite, we extend the definition of $\Lambda_{p,q}({\bf A})$ to $\Lambda_{\infty,q}({\bf A})$ consisting of $(B_1, \dots, B_m) \in {\bf M}_q^m$ such that $I_\infty \otimes B_j$ is a compression of $A_j$ on a closed subspace of ${\mathcal H}$, and consider the joint essential $(p,q)$-matricial range $$\Lambda^{ess}_{p,q}({\bf A}) = \bigcap \{ {\bf cl}(\Lambda_{p,q}(A_1+F_1, \dots, A_m+F_m)): F_1, \dots, F_m \hbox{ are compact operators}\}.$$ Both sets are shown to be convex, and the latter one is always non-empty and compact.
 \end{abstract}

\begin{keyword}
Joint matricial range \sep Joint essential numerical range \sep Higher rank numerical range \sep Bounded linear operators
\vskip .1in
\MSC[2010] 	47A12 \sep 47A13 \sep 47A55 \sep 15A60
\end{keyword}

\end{frontmatter}

\section{Introduction}
Let $\cB(\cH)$ be the algebra of bounded linear operators acting on a complex
Hilbert space $\cH$. If $\cH$ has dimension $n < \infty$, we identify $\cB(\cH)$
with $\bM_n$, the space of $n \times n$ complex matrices.
The numerical range of $A \in \cB(\cH)$ is defined and denoted by
$$W(A) = \{\langle Ax, x \rangle: x \in \cH,~ \|x\| = 1\}.$$
It is a useful concept for studying matrices and operators; see~\cite{HJ,GR}.
The Toeplitz-Hausdorff Theorem asserts that this set is always
convex \cite{Hau,Toe}, i.e. $tw_1+(1-t)w_2\in W(A)$ for all $w_1,\ w_2\in W(A)$ and $0\le t\le 1$.
As shown by many researchers, there are interesting interplay between
the geometrical properties of the numerical ranges and the
algebraic and analytic properties of the operators;
for example; see \cite{AL,Halmos,HJ,GR}.
Motivated by problems from theoretical and applied areas, researchers have
considered different generalizations of the numerical range,
and extended the results on the classical numerical range to the
generalized numerical ranges. We mention a few of them related to our study in the following.

Let $\cV_q$ denote the set of operators $X: \cK \rightarrow \cH$
for some $q$-dimensional subspace $\cK$ of $\cH$ such that $X^*X = I_{\cK}$.
To study the compressions of $A \in \cB(\cH)$ on a subspace of $\cH$,
researchers consider the  $q$-matricial range defined by
$$W(q: A) = \{ X^*A X: X\in \cV_{q}\} \subseteq \bM_q.$$
One may see the basic references \cite{LT,Thompson,TW} and
the excellent survey \cite{Farenick} on the topic. We remark that
$W(q: A)$ is called spatial matricial range in \cite{Farenick}.

In the study of joint
behavior of several operators in $\cB(\cH)$, researchers
consider the joint numerical range of an $m$-tuple
$\bA = (A_1, \dots, A_m) \in \cB(\cH)^m$,
$$W(\bA) = \left\{\left( \langle A_1x,x\rangle, \dots, \langle A_mx, x\rangle\right):
x\in \cH, ~\|x\| = 1\right\}.$$
In the study of control theory, this is known as the $m$-multiform
numerical range, and the convexity of the sets is useful;
see \cite{AYP,Fan,LP0} and their references.

In connection to the study of quantum error correction, researchers study
the $(p,q)$-matricial range $\Lambda_{p,q}(A)$ of $A \in \cB(\cH)$ defined as follows.
Let $p,q$ be positive integers with $pq \le \dim \cH$. Then
$$\Lambda_{p,q}(A) = \{ B \in \bM_q:  X^*AX = I_p \otimes B \hbox{ for some }
X \in \cV_{pq}\}.$$
When $q = 1$, the definition reduces to the rank $p$-numerical range of $A$
defined by
$$\Lambda_p(A) = \{ b: X^*AX = b I_p \hbox{ for some } X \in \cV_p\}.$$
One may see \cite{CKZ,LP2,LPS} and their references for the background of these
concepts.\footnote{Note that in \cite{LPS}, the definition
of $\Lambda_{p,q}(A)$  is slightly different from but equivalent to ours.}

In fact, in the study of quantum error correction, it is more important to study
the joint $(p,q)$-matricial range and the joint rank $p$-numerical range of
an $m$-tuple of operators $\bA = (A_1, \dots, A_m)$ defined, respectively, by
$$\Lambda_{p,q}(\bA) 
= \left\{(B_1,\dots,B_m)\in \bM_q^m:
X^*A_jX = I_p \otimes B_j\hbox{ with }
X \in \cV_{pq} \hbox{ for } j = 1,\dots, m\right\},$$
and
$$\Lambda_{p}(\bA) 
= \left\{(b_1,\dots,b_m): X^*A_jX = b_jI_p \hbox{ with }
X \in \cV_{p} \hbox{ for } j = 1,\dots, m\right\}.$$
Of course, one may also consider the special case when $p = 1$, and define the
joint $q$-matricial range of $\bA$ by
$$W(q:\bA) = \{ (X^*A_1X, \dots, X^*A_mX): X \in \cV_q\}.$$

We are interested in the geometrical properties of the generalized
numerical ranges mentioned above.
In \cite{LPS}, it was shown that the
$\Lambda_{p,q}(A)$ could be quite delicate even for one Hermitian
matrix $A$.  In \cite{LP0}, it was shown that
the joint numerical range $W(A_1, \dots, A_m)$
may not be convex if $m \ge 4$; moreover, if $\{I,A_1,A_2,A_3\}$ is linearly
independent, then one can always find a rank-2 orthogonal projection
$A_4$ such that $W(A_1,A_2,A_3,A_4)$
is not convex. When the generalized numerical range fails to be convex,
researchers try to establish some weaker and useful
geometric properties. Let $V$ be a vector space
over $\IR$ or $\IC$. A subset $S$ of $V$ is said to be {\it star-shaped}, if there exists $s_0\in S$
such that  $ts_0+(1-t)s\in S$ for all $s\in S$ and $0\le t\le 1$. The point $s_0$ is called a
{\it star-center} of $S$.
Some  star-shapedness and convexity results on $\Lambda_{p}(\bA)$
were obtained in \cite{LP1,LP2} provided that the underlying Hilbert space has a high dimension.

In Section 2, we will show that $\Lambda_{p,q}(\bA)$ is star-shaped if $\dim \cH$ is
sufficiently large. If $\cH$ is infinite dimensional, the dimension condition holds
automatically.
As a result, the sets $\Lambda_{p,q}(\bA)$, $W(q:A)$ are always star-shaped;
the images of these sets under affine maps are all star-shaped.
As we shall see, this will further imply the star-shapedness of
other generalized numerical ranges.

If $\dim\cH$ is infinite, we extend the definition of
$\Lambda_{p,q}(A_1, \dots, A_m)$ to
$\Lambda_{\infty,q}(A_1, \dots, A_m)$ consisting of $(B_1, \dots, B_m) \in \bM_q^m$ such
that
$$I_\infty \otimes B_j = B_j \oplus B_j \oplus B_j \oplus \cdots$$
is a compression of $A_j$
on an infinite dimensional closed subspace $\cK$ of $\cH$.
In Section 3, we show that  the set $\Lambda_{\infty,q}(\bA)$ is always convex.

In connection to the study of
operators in the Calkin algebra, we consider the
joint essential $(p,q)$-matricial range of $\bA$ defined by
$$\Lambda^{ess}_{p,q}(\bA) =
\bigcap\{ \cl\left(\Lambda_{p,q}(A_1 + F_1, \dots, A_m + F_m)\right):\ F_1, \dots, F_m\in \cB(\cH)
 \hbox{ are compact operators} \},$$
where $\cl(X)$ denotes the closure of the set $X$.
In Section 4, we show that $\Lambda_{p,q}^{ess}(\bA)$ is the same as
the joint essential $q$-matricial range
$$
W_{ess}(q:\bA) =
\bigcap\{ \cl\left(W(q:(A_1 + F_1, \dots, A_m + F_m))\right) :\ F_1, \dots, F_m\in \cB(\cH)
 \hbox{ are compact operators} \}
$$
considered by other researchers;
see \cite{Arveson,LP2,Muller,Paulsen} and their references.
Moreover, we show that $\Lambda_{p,q}^{ess}(\bA)
= W_{ess}(q:\bA)$ is always a non-empty compact
convex set. As a result, the sets $\Lambda_{p,q}^{ess}(\bA)$ are the same for all positive integers $p$.
Furthermore, in the definitions of
the sets $W_{ess}(q:\bA)$ and
$\Lambda_{p,q}^{ess}(\bA)$, we show that one can replace
$F_1, \dots, F_m$ by finite rank operators.
These extend the results in \cite{LP1,LP2}.
Some related results and problems will be discussed in Section 5.

\medskip
To conclude this section, we mention some reductions that can be used in our
study.
Firstly, let $\cS(\cH)$ be the real linear space of self-adjoint operators in $\cB(\cH)$
and identify $\cS(\cH)$ with ${\bf H}_n$, the space of $n \times n$ Hermitian matrices
when $\dim \cH = n < \infty$.
Every $A \in \cB(\cH)$ can be written as $A = H+iG$ for
a pair of $H, G\in \cS(\cH)$. The set $\Lambda_{p,q}(A)$
can be identified with
$$\Lambda_{p,q}(H,G) = \left\{ (B,C) \in
{\bf H}_q^2: X^*HX = I_p \otimes B ~\hbox{ and }~  X^*G X = I_p \otimes C,\ X\in \cV_{pq}\right\}.$$
Therefore, one may focus on the joint $(p,q)$-matricial range of $m$ self-adjoint operators,
i.e., $A_1,\dots, A_m \in \cS(\cH)$.

\medskip
Secondly, suppose $T = (t_{ij}) \in \bM_m(\IR)$ is nonsingular,
and $B_j = \sum_{i=1}^m t_{ij} A_{i}$.  Then
$(Z_1, \dots, Z_m) \in \Lambda_{p,q}(B_1, \dots, B_m)$
if and only if $Z_j = \sum_{i=1}^m t_{ij} Y_i$ for some
$(Y_1, \dots, Y_m) \in \Lambda_{p,q}(A_1, \dots, A_m)$.
Also, we may  assume that $\{A_1, \dots, A_m\}$ is linearly independent.

\medskip
Finally, we state some standard properties of $\Lambda_{p,q}(\bA)$ that will be used in this paper.
\begin{enumerate}
\item  The joint $(p,q)$-matricial range is invariant under simultaneous unitary conjugation, i.e.,
$$\Lambda_{p,q}(U^* A_1U, \dots, U^*A_mU) = \Lambda_{p,q} (A_1, \dots, A_m)
\quad \hbox{for all unitary $U\in \cB(\cH)$.}$$
\item Suppose $(B_1,\dots, B_m)$ is a compression of $(A_1,\dots, A_m)$. Then
$$\Lambda_{p,q}(B_1,\dots, B_m) \subseteq \Lambda_{p,q}(A_1,\dots, A_m).$$
\item Suppose $(X_1,\dots,X_m) \in \Lambda_{p,q}(A_1,\dots, A_m)$ and 
$(Y_1,\dots,Y_m)  \in \Lambda_{p,q}(B_1,\dots, B_m)$. Then
$$\left(tX_1 + (1-t)Y_1,\dots, tX_m + (1-t) Y_m \right) \in \Lambda_{p,q}\left( A_1 \oplus B_1, \dots, A_m \oplus B_m\right) \quad\hbox{for all } t \in [0,1].$$
\end{enumerate}

\section{Star-shapedness}

In this section, we show that $\Lambda_{p,q}(\bA)$ is always star-shaped if the
dimension of $\cH$ is sufficiently large. We also give some estimations on
the dimension of $\cH$ that ensure the star-shapedness, and non-emptyness of
$\Lambda_{p,q}(\bA)$. Some consequences of the results will be mentioned.

For $1\le r \le \dim \cH$, let $\cV_r^\bot$
be the set of operators $X: \cK^\bot \to \cH$ such that $X^*X = I_{\cK^\bot}$, 
where $\cK^\bot$ is the orthogonal complement of a $r$-dimensional subspace $\cK$ of $\cH$.

\begin{theorem} \label{main0}
 Let $\bA = (A_1, \dots, A_m)\in \cS(\cH)^m$ be an $m$-tuple of self-adjoint operators in $\cB(\cH)$.
 Suppose
$$\bC = (C_1, \dots, C_m) \in \Lambda_{p,q}(Y^*A_1Y, \dots, Y^*A_mY) \quad \hbox{for any} \quad Y \in \cV_{pq(m+1)}^\bot.$$
Then $\bC$ is a star-center of $\Lambda_{p,q}(\bA)$.
\end{theorem}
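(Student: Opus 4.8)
The plan is to show that for any $\bB=(B_1,\dots,B_m)\in\Lambda_{p,q}(\bA)$ and any $t\in[0,1]$, the point $t\bC+(1-t)\bB$ lies in $\Lambda_{p,q}(\bA)$. Fix $X\in\cV_{pq}$ with $X^*A_jX=I_p\otimes B_j$ for all $j$, and let $\cM=X(\cK)$ be the corresponding $pq$-dimensional subspace of $\cH$, so that the compression of $\bA$ to $\cM$ is (unitarily equivalent to) $(I_p\otimes B_1,\dots,I_p\otimes B_m)$. First I would pass to the orthogonal complement: the operator whose columns span $\cM^\bot$ is some $Y\in\cV_{pq}^\bot$, and by hypothesis, applied on a suitable further subspace, we want $\bC\in\Lambda_{p,q}(Y^*A_1Y,\dots,Y^*A_mY)$. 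The subtlety is the index: the hypothesis requires $Y\in\cV_{pq(m+1)}^\bot$, i.e.\ we have removed a $pq(m+1)$-dimensional subspace, whereas $\cM$ only has dimension $pq$. So the real first step is to enlarge $\cM$: find a $pq(m+1)$-dimensional subspace $\cN\supseteq\cM$ whose compression of $\bA$ still has the block form $(I_p\otimes B_1,\dots,I_p\otimes B_m)$ in the first $pq$ coordinates and is otherwise under control. This is exactly the kind of dilation/padding argument used for higher-rank numerical ranges, and I expect it to be the main obstacle.

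Concretely, here is how I would arrange the padding. Starting from $X\in\cV_{pq}$ with $X^*A_jX=I_p\otimes B_j$, I would iteratively extend: since we have plenty of room ($\dim\cH$ is large, or infinite), we can choose additional orthonormal vectors and, by a compression/averaging argument on the orthogonal complement of what we have built so far, produce $\widehat X\in\cV_{pq(m+1)}$ with
$$\widehat X^*A_j\widehat X=I_{m+1}\otimes(I_p\otimes B_j)=I_{p(m+1)}\otimes B_j,\qquad j=1,\dots,m.$$
Equivalently, one builds a $pq(m+1)$-dimensional subspace $\cN$ on which each $A_j$ compresses to $m+1$ copies of $I_p\otimes B_j$. (If this cannot be done exactly for arbitrary $\bB$, the fallback is to use property~(2) and property~(3) from the list of standard properties: realize $(I_p\otimes B_j)^{\oplus(m+1)}$ as a compression of $A_j^{\oplus(m+1)}$ and then play the direct-sum trick, but I would first try the direct dilation inside $\cH$.) Let $\cN^\bot$ be the orthogonal complement of $\cN$ and let $Y\in\cV_{pq(m+1)}^\bot$ be the inclusion; note $pq(m+1)+\dim\cN^\bot=\dim\cH$.

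Now apply the hypothesis to this $Y$: there is $Z\in\cV_{pq}$ (mapping into $\cN^\bot$, viewed inside $\cH$ via $Y$) with
$$Z^*(Y^*A_jY)Z=(YZ)^*A_j(YZ)=I_p\otimes C_j,\qquad j=1,\dots,m,$$
so the compression of $\bA$ to the $pq$-dimensional subspace $\cN^\bot\cap(\text{range }YZ)$ equals $(I_p\otimes C_1,\dots,I_p\otimes C_m)$. Since this subspace is orthogonal to $\cN$, and $\cN$ carries the compression $\bigl((I_p\otimes B_j)^{\oplus(m+1)}\bigr)_j$, the compression of $\bA$ to the direct sum of these two orthogonal subspaces is the block-diagonal tuple $\bigl((I_p\otimes B_j)^{\oplus(m+1)}\oplus(I_p\otimes C_j)\bigr)_j$. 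Finally I would invoke property~(3) (convexity along a direct sum): for any $t\in[0,1]$, since $(I_p\otimes B_j)^{\oplus(m+1)}$ contributes the point $\bB$ and $I_p\otimes C_j$ contributes $\bC$ in the appropriate matricial ranges, a convex combination $tC_j+(1-t)B_j$ arises as $I_p\otimes(tC_j+(1-t)B_j)$ in $\Lambda_{p,q}$ of that block-diagonal compression, hence in $\Lambda_{p,q}(\bA)$ by property~(2). This shows $t\bC+(1-t)\bB\in\Lambda_{p,q}(\bA)$ for all $\bB\in\Lambda_{p,q}(\bA)$ and all $t\in[0,1]$, i.e.\ $\bC$ is a star-center, completing the proof. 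The only place where care is genuinely needed is the padding step: making the multiplicities match so that the weight $(m+1):1$ in the direct sum, combined with property~(3), yields \emph{every} $t\in[0,1]$ rather than just a discrete set of ratios — this is why the ``$m+1$'' copies (one extra beyond what a naive count suggests) appear in the index $pq(m+1)$.
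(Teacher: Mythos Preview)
Your proposal has a genuine gap, and it stems from a misreading of where the index $pq(m+1)$ comes from.

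You claim that once you have a $pq(m+1)$-dimensional subspace $\cN$ and a $pq$-dimensional subspace inside $\cN^\bot$ (the range of $YZ$) carrying the compression $I_p\otimes C_j$, the compression of $\bA$ to their direct sum is automatically \emph{block-diagonal}. That is false: orthogonality of $\cN$ and $\mathrm{range}(YZ)$ says nothing about the off-diagonal blocks $\langle A_j u,v\rangle$ for $u\in\cN$, $v\in\mathrm{range}(YZ)$. Without those blocks vanishing you cannot invoke property~(3). Relatedly, the ``padding'' step---producing $(I_p\otimes B_j)^{\oplus(m+1)}$ as a compression of $A_j$---is both unjustified (from $\bB\in\Lambda_{p,q}(\bA)$ one cannot in general conclude $\bB\in\Lambda_{p(m+1),q}(\bA)$) and unnecessary: property~(3) already delivers \emph{every} $t\in[0,1]$ from a single copy of each block, so the ``weight $(m+1):1$'' rationale is wrong.

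The paper's argument uses the $pq(m+1)$ for a completely different purpose. Given $X_1\in\cV_{pq}$ realizing $\bB$, it sets
\[
\cL=\span\{\,\cK_1,\ A_1(\cK_1),\ \dots,\ A_m(\cK_1)\,\}
\]
(after conjugating by a unitary extending $X_1$), which has dimension at most $pq+pq\cdot m=pq(m+1)$. Taking $Y$ to be the inclusion of $\cL^\bot$, the hypothesis gives $X_2$ with $(YX_2)^*A_j(YX_2)=I_p\otimes C_j$. The point of including each $A_j(\cK_1)$ in $\cL$ is precisely to force $(YX_2)^*A_jX_1=0$: any vector in $\cL^\bot$ is orthogonal to $A_j(\cK_1)$. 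Now the compression to $\cK_1\oplus\cK_2$ \emph{is} block-diagonal, $\mathrm{diag}(I_p\otimes B_j,\ I_p\otimes C_j)$, and property~(3) finishes. So the $m+1$ counts ``$\cK_1$ plus one image $A_j(\cK_1)$ for each of the $m$ operators,'' not $m+1$ copies of $\bB$.
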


\begin{proof}
Suppose $\bB = (B_1,\dots, B_m) \in \Lambda_{p,q}(\bA)$.
Then there is $X_1:\cK_1 \to \cH$ with $X_1^*X_1 = I_{\cK_1}$
for some $pq$-dimensional subspace $\cK_1$ of $\cH$
such that $X_1^*A_j X_1 = I_p \otimes B_j$ for $j =1 ,\dots,m$.
Extend the operator $X_1$ to an unitary operator $U:\cH \to \cH$ so that $U|_{\cK_1} = X_1$.
Let $\cL$ be the subspace spanned by
$$\left\{\cK_1,\,  U^*A_1U(\cK_1),\, \dots,\, U^*A_mU(\cK_1) \right\}.$$
Then $\cL$ has dimension at most $pq(m+1)$.
By extending the subspace $\cL$ if necessary, we may assume $\dim \cL = pq(m+1)$.
Define $Y = U|_{\cL^\bot}$.
Then $Y^*Y = I_{\cL^\bot}$ and $Y \in \cV^\bot_{pq(m+1)}$.
Furthermore, as $X_1  = U|_{\cK_1}$, $Y = U|_{\cL^\bot}$
and $\cK_1  \subseteq \cL$, we have $Y^*X_1 = 0$.
Also for any $u \in \cL^\bot$ and $v \in \cK_1$,
$U^*A_jUv \in \cL$ and hence
$$\langle u, Y^*A_j X_1 v\rangle
= \langle  Yu, A_j X_1 v \rangle
= \langle  Uu, A_j U v \rangle
= \langle  u, U^*A_j U v \rangle
= 0$$
and thus, $Y^*A_j X_1 = 0$ for all $j =1,\dots,m$.
Now by the assumption,
$$\bC = (C_1, \dots, C_m) \in \Lambda_{p,q}(Y^*A_1Y, \dots, Y^*A_mY).$$
There exists $X_2: \cK_2 \to  \cL^\bot$ with $X_2^*X_2 = I_{\cK_2}$
for some $pq$-dimensional subspace $\cK_2$ of $\cL^\bot$
such that $X_2^*(Y^* A_j Y) X_2 = I_p \otimes C_j$ for $j = 1,\dots,m$.
Observe that  $\cK_1$ and $\cK_2$ are two distinct $pq$-dimensional subspaces of $\cH$ and are orthogonal to each other.
Furthermore, $X_1: \cK_1 \to \cH$ and $YX_2: \cK_2 \to \cH$ satisfy 
$$X_1^*X_1 = I_{\cK_1},\quad (YX_2)^*(YX_2) = I_{\cK_2},\quad 
(YX_2)^*X_1 = 0
\quad\hbox{and}\quad
(YX_2)^*A_j X_1 = 0 \quad \hbox{for } j =1,\dots,m.$$
Then the operator $Z = X_1 \oplus (YX_2): \cK_1 \oplus \cK_2 \to \cH$
satisfies $V^*V = I_{\cK_1\oplus \cK_2}$ and 
$$Z^*A_j Z =
\begin{bmatrix}
I_p \otimes B_j & 0  \cr
0 & I_p \otimes C_j
\end{bmatrix}\quad j=1,\dots,m,$$
with respect to the $2pq$-dimensional subspace $\cK_1 \oplus \cK_2$.
Clearly, 
$\bB \in \Lambda_{p,q}(I_p \otimes B_1,\dots, I_p \otimes B_m)$ and $\bC \in \Lambda_{p,q}(I_p\otimes C_1,\dots, I_p \otimes C_m)$.
By the two properties as stated at the end of Section 1, for any $t\in [0,1]$, 
$$t \bB + (1-t)\bC = \left(t B_1 + (1-t) C_1,\dots, t B_m + (1-t) C_m \right) \in \Lambda_{p,q} \left(Z^* A_1 Z, \dots, Z^*A_m Z  \right)
\subseteq \Lambda_{p,q}(A_1,\dots, A_m);$$
hence $\bC$ is a star-center of $\Lambda_{p,q}(\bA)$.
\end{proof}

By Theorem~\ref{main0}, if $\bigcap \{\Lambda_{p,q}(Y^*A_1Y,\dots,Y^*A_1Y):Y\in\cV^\bot_{pq(m+1)}\} $ is non-empty, then $\Lambda_{p,q}(\bA)$ is star-shaped and $\bigcap \{\Lambda_{p,q}(Y^*A_1Y,\dots,Y^*A_1Y):Y\in\cV^\bot_{pq(m+1)}\} $ is a subset of the star-center of $\Lambda_{p,q}(\bA)$.
\vskip .1in
In particular, we have the following result for the joint $q$-matricial range.

\begin{corollary} \label{main}
 Let $\bA = (A_1, \dots, A_m)\in \cS(\cH)^m$ be an $m$-tuple of self-adjoint operators in $\cB(\cH)$.
Suppose
$$\bC = (C_1, \dots, C_m) \in W(q:(Y^*A_1Y, \dots, Y^*A_mY)) \quad \hbox{for any} \quad Y \in \cV_{q(m+1)}^\bot.$$
Then $\bC$ is a star-center of $W(q:\bA)$.
\end{corollary}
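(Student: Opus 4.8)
The statement is Corollary~\ref{main}, which is the special case $p = 1$ of Theorem~\ref{main0}. The plan is simply to observe that when $p = 1$, the condition $I_p \otimes B_j = B_j$ degenerates, so that $\Lambda_{1,q}(\bA) = W(q:\bA)$, and similarly $\cV_{q(m+1)}^\bot = \cV_{1 \cdot q(m+1)}^\bot = \cV_{pq(m+1)}^\bot$ with $p = 1$. First I would spell out that for an $m$-tuple of self-adjoint operators $\bA$,
$$\Lambda_{1,q}(\bA) = \{(B_1,\dots,B_m) \in \bM_q^m : X^*A_jX = B_j,\ X \in \cV_q,\ j = 1,\dots,m\} = W(q:\bA),$$
since $I_1 \otimes B_j = B_j$ and $\cV_{1\cdot q} = \cV_q$. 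Hence the hypothesis of the corollary, namely $\bC \in W(q:(Y^*A_1Y,\dots,Y^*A_mY))$ for every $Y \in \cV_{q(m+1)}^\bot$, is literally the hypothesis of Theorem~\ref{main0} with $p = 1$.

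Then I would invoke Theorem~\ref{main0} directly: it gives that $\bC$ is a star-center of $\Lambda_{1,q}(\bA) = W(q:\bA)$, which is exactly the conclusion sought. There is essentially no new content; the only thing to check is the bookkeeping that the index $pq(m+1)$ becomes $q(m+1)$ when $p=1$, and that $\cV_{pq(m+1)}^\bot$ and $\cV_{q(m+1)}^\bot$ coincide under that substitution.

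Since this is a one-line deduction, the "main obstacle" is purely cosmetic: making sure the reader sees that the identification $\Lambda_{1,q}(\bA) = W(q:\bA)$ is valid for self-adjoint $\bA$ (indeed for arbitrary operators, but the corollary is stated for self-adjoint tuples to match Theorem~\ref{main0}), and that no subtlety hides in the passage $p \to 1$. I would therefore present the proof as: "This is immediate from Theorem~\ref{main0} by taking $p = 1$ and noting that $\Lambda_{1,q}(\cdot) = W(q:\cdot)$." If a slightly fuller write-up is wanted, I would add the displayed identity above as a single justifying sentence, and nothing more.
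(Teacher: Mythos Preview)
Your proposal is correct and matches the paper's approach exactly: the paper presents Corollary~\ref{main} as an immediate specialization of Theorem~\ref{main0} (introduced with ``In particular, we have the following result for the joint $q$-matricial range'') and gives no separate proof, relying on the identification $\Lambda_{1,q}(\cdot) = W(q:\cdot)$ and $pq(m+1) = q(m+1)$ when $p=1$, just as you describe.
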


In general, it may not be easy to check whether one can find
$(C_1, \dots, C_m) \in {\mathbf H}_q^{m}$ satisfying the assumption in
Theorem \ref{main0} and Corollary \ref{main}.  In this connection, we have
the following.

\begin{theorem} \label{main2}
Let $\bA = (A_1, \dots, A_m)\in \cS(\cH)^m$ be an $m$-tuple of self-adjoint operators in $\cB(\cH)$.
Suppose $\dim \cH \ge (pq(m+2)-1)(m+1)^2$. Then
\begin{itemize}
\item[{\rm (1)}]
$\Lambda_{pq(m+2)}(\bA)$ is non-empty.
\item[{\rm (2)}]
$\Lambda_{p,q}(\bA)$ is star-shaped and
$(c_1 I_q, \dots, c_m I_q)$ is a star-center of $\Lambda_{p,q}(\bA)$
for all $(c_1,\dots,c_m) \! \in \!\Lambda_{pq(m+2)}(\bA)$.
\item[{\rm (3)}] For every real affine map $L: {\bf H}_{q}^m \rightarrow \IR^r$, the set
 $$\{L(B_1, \dots, B_m): (B_1, \dots, B_m)\in \Lambda_{p,q}(\bA)\}$$
 is star-shaped with star-center $L(c_1I_q, \dots, c_mI_q)$ for all $(c_1,\dots,c_m) \in \Lambda_{pq(m+2)}(\bA)$.
\end{itemize}
\end{theorem}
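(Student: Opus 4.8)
The plan is to reduce everything to an application of Theorem~\ref{main0} together with a dimension count. For part (1), I would invoke a known existence result for the joint rank-$k$ numerical range (the higher-rank analogue of the Toeplitz--Hausdorff type theorems, e.g.\ the results of Li--Poon type cited as \cite{LP1,LP2}): if $A_1,\dots,A_m$ are self-adjoint acting on a space of dimension at least $(k-1)(m+1)^2$, then $\Lambda_k(\bA)\neq\emptyset$, and in fact one can produce a point of the form $(c_1,\dots,c_m)$ with a compression $X^*A_jX=c_jI_k$. Applying this with $k=pq(m+2)$ and the hypothesis $\dim\cH\ge (pq(m+2)-1)(m+1)^2$ gives (1) directly. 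I would state clearly which earlier result is being quoted; this is the one genuinely external ingredient.

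For part (2), take any $(c_1,\dots,c_m)\in\Lambda_{pq(m+2)}(\bA)$. The point is to verify that $\bC=(c_1I_q,\dots,c_mI_q)$ satisfies the hypothesis of Theorem~\ref{main0}, i.e.\ that $\bC\in\Lambda_{p,q}(Y^*A_1Y,\dots,Y^*A_mY)$ for every $Y\in\cV^\bot_{pq(m+1)}$. Fix such a $Y$, so $Y^*A_jY$ acts on $\cL^\bot$ where $\cL$ is a $pq(m+1)$-dimensional subspace. Since $(c_1,\dots,c_m)\in\Lambda_{pq(m+2)}(\bA)$, there is a $pq(m+2)$-dimensional subspace $\cM$ and an isometry $W:\mathbb{C}^{pq(m+2)}\to\cH$ with range $\cM$ and $W^*A_jW=c_j I_{pq(m+2)}$. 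The intersection $\cM\cap\cL^\bot$ has dimension at least $pq(m+2)-pq(m+1)=pq$; pick a $pq$-dimensional subspace $\cN\subseteq\cM\cap\cL^\bot$ and let $X_2:\mathbb{C}^{pq}\to\cL^\bot$ be the inclusion isometry onto $\cN$. Then $X_2^*(Y^*A_jY)X_2 = X_2^*A_jX_2$ (because $Y$ acts as the identity on $\cL^\bot$ after composing with the inclusion, or more carefully $Yu=u$ for $u\in\cL^\bot$ under the identification $\cV^\bot$ uses), and since $\cN\subseteq\cM$ this equals $c_j I_{pq}=I_p\otimes(c_jI_q)$. Hence $\bC\in\Lambda_{p,q}(Y^*A_1Y,\dots,Y^*A_mY)$ as required, and Theorem~\ref{main0} yields that $\bC$ is a star-center of $\Lambda_{p,q}(\bA)$; star-shapedness follows once we also know $\Lambda_{p,q}(\bA)\neq\emptyset$, which holds because the star-center itself lies in the set (taking $Y$ on a suitable subspace, or just noting $\bC\in\Lambda_{p,q}(\bA)$ by the $Y$-free version of the argument with $\cL=\{0\}$).

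Part (3) is then immediate: a real affine map $L$ satisfies $L(t\bB+(1-t)\bC)=tL(\bB)+(1-t)L(\bC)$, so the image of a star-shaped set under $L$ is star-shaped with star-center $L$ of the original star-center; apply this with the star-center $(c_1I_q,\dots,c_mI_q)$ from part (2). I expect the main obstacle to be the bookkeeping in part (2): one must be careful about the identification of $Y^*A_jY$ acting on $\cL^\bot$ with the compression of $A_j$, and about the dimension inequality $\dim(\cM\cap\cL^\bot)\ge pq$, which is where the ``$+2$'' in $pq(m+2)$ versus the ``$+1$'' in $\cV^\bot_{pq(m+1)}$ is used; getting the constant $(pq(m+2)-1)(m+1)^2$ to line up with the external existence theorem for $\Lambda_{pq(m+2)}(\bA)$ is the other place where care is needed, and I would double-check the exact hypothesis of the result being cited from \cite{LP1,LP2}.
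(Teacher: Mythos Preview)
Your approach is essentially the same as the paper's: for (1) both invoke the Li--Poon bound $\dim\cH\ge(k-1)(m+1)^2\Rightarrow\Lambda_k(\bA)\neq\emptyset$ with $k=pq(m+2)$ (this is the paper's Proposition~\ref{2.4}); for (2) both feed $(c_1I_q,\dots,c_mI_q)$ into Theorem~\ref{main0}; and (3) is immediate from (2) in both. The only difference is that the paper cites Proposition~\ref{2.5} (namely $\Lambda_k(\bA)\subseteq\Lambda_{k-r}(Y^*\bA Y)$ for $Y\in\cV_r^\bot$) with $k=pq(m+2)$, $r=pq(m+1)$, whereas you reprove this special case inline via the subspace intersection argument.

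There is, however, a genuine slip in your inline argument. An element $Y\in\cV^\bot_{pq(m+1)}$ is an \emph{arbitrary} isometry $Y:\cL^\bot\to\cH$, not the inclusion; its range $Y(\cL^\bot)$ need not equal $\cL^\bot$, so the claimed identity $X_2^*(Y^*A_jY)X_2=X_2^*A_jX_2$ is false as written. The fix is simple: intersect $\cM$ with the \emph{range} $Y(\cL^\bot)$ rather than with $\cL^\bot$. Since $Y(\cL^\bot)$ has codimension $pq(m+1)$ in $\cH$, the same dimension count gives $\dim\bigl(\cM\cap Y(\cL^\bot)\bigr)\ge pq$; choose $\cN'\subseteq\cM\cap Y(\cL^\bot)$ of dimension $pq$, set $\cN=Y^{-1}(\cN')\subseteq\cL^\bot$, and take $X_2$ with range $\cN$. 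Then $YX_2$ has range $\cN'\subseteq\cM$, so $(YX_2)^*A_j(YX_2)=c_jI_{pq}=I_p\otimes(c_jI_q)$ as required. With this correction your argument goes through and matches the paper's.
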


\medskip
Below, we give several examples of real affine maps in (3).

\medskip
\begin{enumerate}
\item Let $L(B_1, \dots, B_m) = ( L_1(B_1), \dots, L_m(B_m))$,
for any affine functions $L_1, \dots, L_m$.

\item   Let $L(B_1, \dots, B_m) = (\tr B_1, \dots, \tr B_m)$.
We get the joint $q$-numerical range in the Halmos-Berger sense when $p=1$;
see \cite{Halmos,LP0}.

\item  Let $L(B_1, \dots, B_m) = (\tr (CB_1), \dots, \tr (C B_m))$ for a
matrix $C\in {\bf H}_q$. We get the joint $C$-numerical range when $p=1$;
see \cite{AYT,Chien2014,CLP}.
\end{enumerate}

To prove Theorem \ref{main2}, we
need the following results on $\Lambda_{p}(\bA)$ obtained in
\cite[Proposition 2.4 and Proposition 2.5]{LP2}.

\begin{proposition}\label{2.4}
Let $\bA = (A_1, \dots, A_m)\in \cS(\cH)^m$ be an $m$-tuple of self-adjoint operators in
$\cB(\cH)$ and $k>1$. If $\dim \cH \ge (k- 1)(m+1)^2$, then $\Lambda_{k}(\bA)$  is non-empty.
\end{proposition}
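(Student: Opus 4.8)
The plan is to argue by induction on $m$, after reducing to a finite-dimensional problem.

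\emph{Reductions.} Since $\Lambda_k$ of a compression of $\bA$ is contained in $\Lambda_k(\bA)$ (property~(2) at the end of Section~1), it suffices to find a witness inside a compression. Compressing all $A_j$ to a fixed subspace of $\cH$ of dimension exactly $(k-1)(m+1)^2$ (which exists by hypothesis) leaves us with an $m$-tuple of self-adjoint matrices on an $n$-dimensional space, $n := (k-1)(m+1)^2$. By the reductions at the end of Section~1 we may also replace each $A_j$ by $A_j - \gamma_j I$ and by real linear combinations of the $A_j$.

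\emph{Base case $m=1$.} For a single self-adjoint $A_1$ on an $n$-dimensional space, a real number $c$ lies in $\Lambda_k(A_1)$ precisely when $A_1-cI$ has at least $k$ nonnegative and at least $k$ nonpositive eigenvalues: writing $n_+,n_-,n_0$ for the numbers of positive, negative and zero eigenvalues of $A_1-cI$, this gives $n_++n_0\ge k$ and $n_-+n_0\ge k$, so $A_1-cI$ has a neutral subspace of dimension $\min(n_+,n_-)+n_0\ge k$; any $k$-dimensional subspace of it realizes $c\in\Lambda_k(A_1)$. Such a $c$ (say, the $k$-th largest eigenvalue) exists whenever $n\ge 2k-1$, and $n=4(k-1)\ge 2k-1$ because $k\ge 2$.

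\emph{Inductive step.} The mechanism for decreasing $m$ is this: if $\cK_0\subseteq\cH$ is a common neutral subspace of $A_1,\dots,A_{m-1}$ with $\dim\cK_0=K$ (so $P_{\cK_0}A_jP_{\cK_0}=c_jI$ on $\cK_0$ for $j<m$), then $M:=P_{\cK_0}A_mP_{\cK_0}$ is a $K\times K$ self-adjoint matrix, and any $k$-dimensional $\cK\subseteq\cK_0$ that is neutral for $M-c_mI$ is automatically a common neutral subspace of all of $A_1,\dots,A_m$, since $\cK\subseteq\cK_0$ forces $P_\cK A_jP_\cK=P_\cK(P_{\cK_0}A_jP_{\cK_0})P_\cK$ for every $j$. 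Thus a common neutral subspace of $A_1,\dots,A_{m-1}$ of dimension $K\ge 2k-1$ yields, via the base case applied to $M$, a point of $\Lambda_k(\bA)$. To obtain such a $\cK_0$ one applies the inductive hypothesis for $m-1$ self-adjoint operators at the appropriate rank; to keep the rank under control I would in fact prove a sharper inductive statement asserting that $\Lambda_k(\bA)$ contains a prescribed point of the joint numerical range lying suitably deep in its interior, so that the scalar $c_m$ can be matched and the construction iterated without the rank blowing up.

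\emph{Main obstacle.} The technical heart is the dimension bookkeeping. The naive one-operator-at-a-time peeling above roughly doubles the required rank at each step and yields only a bound exponential in $m$; reaching the stated polynomial bound $(k-1)(m+1)^2$ requires distributing the loss evenly over the $m$ operators — handling several operators at once and carrying a common, robust target tuple $(c_1,\dots,c_m)$ through all stages — and then checking the resulting arithmetic. I expect verifying that $(k-1)(m+1)^2$ (rather than some larger function of $k$ and $m$) already suffices, while keeping the realized scalar tuple consistent throughout, to be the main difficulty.
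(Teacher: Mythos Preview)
The paper does not prove Proposition~\ref{2.4}; it is quoted from \cite{LP2}, so there is no argument in this paper to compare against.

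As a standalone attempt, your proposal has a genuine gap, and you flag it yourself. The base case $m=1$ is correct. The inductive mechanism---build a large common neutral subspace for $A_1,\dots,A_{m-1}$ and then neutralize $A_m$ inside it---is valid in principle, but the bookkeeping you describe yields only an exponential bound: a $k$-dimensional common neutral subspace for $m$ operators requires, via your reduction, a $(2k-1)$-dimensional one for $m-1$ operators, hence roughly $2^{m}k$ dimensions to start. Your proposed fix (carry a fixed ``deep'' target tuple through a strengthened induction) is left entirely unspecified: what makes a point deep, why such a point exists, and why it persists under each compression with exactly the stated dimension loss are all missing. That is the whole content of the proposition; without it you have an outline, not a proof.

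For orientation, the argument in \cite{LP2} is organized differently. One fixes a target $c\in\IR^m$ once and for all, then repeatedly finds a single unit vector realizing $c$ and passes to the orthogonal complement of that vector together with its images under $A_1,\dots,A_m$, losing at most $m+1$ dimensions per step (compare the proofs of Theorem~\ref{main0} and Lemma~\ref{lem34} in the present paper, which use exactly this device). The two factors of $m+1$ in the bound $(k-1)(m+1)^2$ reflect the per-step loss and the cost of guaranteeing that some $c$ survives all $k-1$ such compressions. So the effective induction is on the rank $k$, not on the number $m$ of operators, and your ``deep point'' intuition is closer to the actual proof than your operator-peeling scheme.
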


\begin{proposition} \label{2.5}
Let $\bA = (A_1, \dots, A_m)\in \cS(\cH)^m$ be an $m$-tuple of self-adjoint operators in
$\cB(\cH)$. Suppose $1 \le r < k \le \dim \cH$. Then
$$\Lambda_{k}(\bA) \subseteq \bigcap \left\{ \Lambda_{k-r} (Y^*\bA Y): Y \in \cV^\bot_r \right\}
\quad\hbox{where}\quad Y^*\bA Y = (Y^*A_1Y, \dots, Y^*A_m Y).$$
\end{proposition}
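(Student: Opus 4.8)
The plan is to verify the inclusion pointwise in $Y$: I fix an arbitrary $Y \in \cV^\bot_r$ and show that every $(b_1, \dots, b_m) \in \Lambda_k(\bA)$ also lies in $\Lambda_{k-r}(Y^*\bA Y)$, so that $(b_1,\dots,b_m)$ belongs to the intersection over all such $Y$. The first ingredient is the geometric reformulation of membership in $\Lambda_k(\bA)$. If $(b_1,\dots,b_m) \in \Lambda_k(\bA)$, realized by an isometry $X \in \cV_k$, then its range $\cL := X(\cdot)$ is a $k$-dimensional subspace of $\cH$ whose orthogonal projection $P_\cL$ satisfies $P_\cL A_j|_\cL = b_j I_\cL$ for $j = 1,\dots,m$; equivalently, $A_j w - b_j w \perp \cL$ for every $w \in \cL$ and every $j$.

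Next I would bring in $Y$. Write $\cR = Y(\cK^\bot)$ for the range of $Y$; since $Y$ is an isometry on a subspace of codimension $r$, the space $\cR$ is closed with $\dim \cR^\bot = r$. The restriction $P_{\cR^\bot}|_\cL$ is a linear map from the $k$-dimensional space $\cL$ into the $r$-dimensional space $\cR^\bot$, so its kernel $\cL \cap \cR$ has dimension at least $k-r$. I then choose a $(k-r)$-dimensional subspace $\cT \subseteq \cL \cap \cR$ and set $\cT_0 = Y^*(\cT) \subseteq \cK^\bot$. Because $Y$ maps $\cK^\bot$ isometrically onto $\cR$ and $\cT \subseteq \cR$, the space $\cT_0$ is again $(k-r)$-dimensional, and $Y|_{\cT_0}$ is an isometry of $\cT_0$ onto $\cT$.

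It then remains to check that the compression of $B_j := Y^*A_jY$ onto $\cT_0$ is the scalar $b_j I_{\cT_0}$. For $u, u' \in \cT_0$, put $v = Yu$ and $v' = Yu'$, both lying in $\cT \subseteq \cL \cap \cR$. Then $\langle B_j u, u'\rangle = \langle A_j v, v'\rangle$, and since $v \in \cL$ and $v' \in \cL$, the orthogonality $A_j v - b_j v \perp \cL$ yields $\langle A_j v, v'\rangle = b_j\langle v, v'\rangle = b_j \langle u, u'\rangle$, the last step using $Y^*Y = I_{\cK^\bot}$. Hence $P_{\cT_0} B_j|_{\cT_0} = b_j I_{\cT_0}$, so $(b_1,\dots,b_m) \in \Lambda_{k-r}(Y^*\bA Y)$; as $Y \in \cV^\bot_r$ was arbitrary, the claimed inclusion follows.

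The main obstacle is the dimension bookkeeping: one must guarantee that $\cL \cap \cR$ is genuinely at least $(k-r)$-dimensional and that transporting it through $Y$ preserves both the dimension and the compression identity. In finite dimensions this is the usual $\dim \cL + \dim \cR - \dim \cH$ count, but it is cleanest to phrase it through $\ker\left(P_{\cR^\bot}|_\cL\right)$ so that the argument remains valid when $\dim \cH = \infty$ — there $\cL$ is still finite-dimensional and $\cR$ has finite codimension $r$, so the rank--nullity estimate still forces $\dim(\cL \cap \cR) \ge k-r$. Once the orthogonality characterization of $\Lambda_k$ and the isometry property $Y^*Y = I_{\cK^\bot}$ are in hand, the verification of the scalar compression is routine.
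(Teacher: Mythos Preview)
Your argument is correct. The paper does not prove this proposition; it is quoted from \cite{LP2} without proof, so there is no ``paper's own proof'' to compare against directly. Your direct approach---intersecting the $k$-dimensional range $\cL$ of the realizing isometry with the range $\cR$ of $Y$ via the kernel of $P_{\cR^\bot}|_\cL$, and then pulling the resulting $(k-r)$-dimensional subspace back through $Y^*$---is clean and handles general $r$ in one stroke.

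For context, the paper does prove a generalization (Proposition~\ref{thm27}, the $(p,q)$-matricial version), and there the method is different: it proceeds by induction on $r$, reducing to the case $r=1$ and constructing an explicit $p\times(p-q)$ matrix $W$ with orthonormal columns annihilating a certain $q\times p$ matrix $Q$ of inner products. Specialized to $q=1$, that argument would also prove Proposition~\ref{2.5}, but your rank--nullity formulation is shorter and more transparent for this scalar case.

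One small caveat: your claim that $\dim\cR^\bot=r$ relies on $Y\in\cV_r^\bot$ having range of codimension exactly $r$. In finite dimensions this is automatic; in infinite dimensions it is the intended reading (and is how the paper uses $\cV_r^\bot$ elsewhere, e.g.\ extending $Y$ to a unitary in the proof of Proposition~\ref{thm27}), but the literal definition in the paper allows arbitrary isometries from $\cK^\bot$. Under the intended interpretation your proof goes through without change.
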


\noindent
\it Proof of Theorem \ref{main2}.
\rm
Note that  $(\lambda_1 I_{q},\dots,\lambda_m I_{q}) \in \Lambda_{p,q}(\bA)$
if and only if
$(\lambda_1,\dots,\lambda_m) \in \Lambda_{pq}(\bA)$.
Then (1) follows immediately from Proposition \ref{2.4}.

\medskip
(2) Suppose $(c_1,\dots,c_m) \in \Lambda_{pq(m+2)}(\bA)$. For all $ Y \in \cV^\bot_{pq(m+1)}$,   by Proposition \ref{2.5} with $k=pq(m+2)$ and $r=pq(m+1)$,  we have
$(c_1,\dots,c_m) \in \Lambda_{pq}(Y^*\bA Y)$.
Hence, $(c_1I_q, \dots, c_mI_q)\in \Lambda_{p,q}(Y^*\bA Y)$. Therefore, the result follows from Theorem \ref{main0}.

\medskip
Clearly, (3) follows from (2).
\qed

\medskip
Note that when $m=1,2$, the bound $(k- 1)(m+1)^2$ in
Proposition \ref{2.4} can be lower to $(m+1)k-m$. In these cases, the bound in Theorem \ref{main2}
can be lower to $(m + 1) (m + 2) pq - m$;
see {\rm \cite{CKZ,LPS0}}.

\medskip

In the following, we obtain additional results on $\Lambda_{p,q}(\bA)$ that are useful in
the analysis in the next section in addition to their own interest.


\vskip 10pt
\begin{proposition}\label{thm27}
Let $\bA = (A_1, \dots, A_m)\in \cS(\cH)^m$ be an $m$-tuple of self-adjoint operators in $\cB(\cH)$.
If $1\le qr <p\le\dim \cH $, then
$$\Lambda_{p,q}(\bA)\subseteq \bigcap \{\Lambda_{p-qr,q}(Y^*\bA Y): Y \in \cV^\bot_r\}
\quad\hbox{where}\quad Y^*\bA Y = (Y^*A_1Y, \dots, Y^*A_m Y).$$
\end{proposition}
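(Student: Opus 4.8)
The plan is to adapt the proof of Proposition~\ref{2.5} (which is essentially the $q=1$ instance of this statement) while carrying the tensor factor $\IC^q$ along. Fix $\bB=(B_1,\dots,B_m)\in\Lambda_{p,q}(\bA)$ and an arbitrary $Y\in\cV_r^\bot$; we must produce a witness for $\bB\in\Lambda_{p-qr,q}(Y^*\bA Y)$. Since $Y$ is an isometry whose range is a subspace of $\cH$ of codimension $r$, and $\Lambda_{p-qr,q}$ is invariant under unitary equivalence of tuples, we may assume $Y$ is the inclusion $\cL^\bot\hookrightarrow\cH$ for some $r$-dimensional subspace $\cL$; then $Y^*A_jY$ is simply the compression of $A_j$ to $\cL^\bot$, and it suffices to find a $(p-qr)q$-dimensional subspace of $\cL^\bot$ carrying a compression of $\bA$ equal to $(I_{p-qr}\otimes B_1,\dots,I_{p-qr}\otimes B_m)$.

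Write the given data for $\bB$ as an isometry $X:\cK:=\IC^p\otimes\IC^q\to\cH$ with $X^*A_jX=I_p\otimes B_j$, where $I_p\otimes B_j$ acts by $\xi\otimes\eta\mapsto\xi\otimes B_j\eta$ and $\IC^p$ is the multiplicity space. The one genuine idea needed is to cut down this multiplicity space. Let $P$ be the orthogonal projection of $\cH$ onto $\cL$ and define the linear map
\[
T:\IC^p\to\operatorname{Hom}(\IC^q,\cL),\qquad (T\xi)\eta=PX(\xi\otimes\eta).
\]
Its target has dimension $qr<p$, so $\ker T$ has dimension at least $p-qr\ge1$; fix a subspace $\cR\subseteq\ker T$ with $\dim\cR=p-qr$. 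By the definition of $T$, $X(\xi\otimes\eta)\perp\cL$ for all $\xi\in\cR$ and all $\eta\in\IC^q$, so the $(p-qr)q$-dimensional subspace $X(\cR\otimes\IC^q)$ is contained in $\cL^\bot$.

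Now let $X'$ be the restriction of $X$ to $\cR\otimes\IC^q\cong\IC^{p-qr}\otimes\IC^q$, an isometry onto $X(\cR\otimes\IC^q)\subseteq\cL^\bot$. The subspace $\cR\otimes\IC^q$ is invariant under each $I_p\otimes B_j$, so the compression of $X^*A_jX=I_p\otimes B_j$ to $\cR\otimes\IC^q$ equals its restriction there, namely $I_{\cR}\otimes B_j$; and since $X(\cR\otimes\IC^q)\subseteq\cL^\bot$, this same operator equals $(X')^*(Y^*A_jY)X'$. Under the identification $\cR\otimes\IC^q\cong\IC^{p-qr}\otimes\IC^q$ we get $(X')^*(Y^*A_jY)X'=I_{p-qr}\otimes B_j$ for all $j$, hence $\bB\in\Lambda_{p-qr,q}(Y^*\bA Y)$. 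As $Y\in\cV_r^\bot$ was arbitrary, the claimed inclusion follows.

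I do not anticipate any real obstacle: apart from the construction of $\cR$, the argument is bookkeeping, and the hypothesis $qr<p$ is exactly what makes $\ker T$ nonzero of the required dimension. The two points worth stating carefully are the initial reduction, via unitary equivalence, to the case where $Y^*A_jY$ is an honest compression to a subspace of $\cH$, and the remark that $\cR\otimes\IC^q$ being invariant under $I_p\otimes B_j$ makes ``compression'' and ``restriction'' agree, which is what lets us read off $I_{p-qr}\otimes B_j$.
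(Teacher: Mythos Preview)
Your proof is correct and rests on the same key idea as the paper's: pass to the kernel of the natural linear map from the multiplicity space $\IC^p$ into $\operatorname{Hom}(\IC^q,\cL)$ to produce an isometry whose range lies in $\cL^\bot$. The only difference is organizational---the paper first treats $r=1$ (writing your map $T$ as an explicit $q\times p$ matrix and taking its null space) and then reaches general $r$ by induction, whereas your coordinate-free formulation via $T$ handles arbitrary $r$ in a single step.
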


\begin{proof}
We start with the case when $r = 1$, that is,
$$\Lambda_{p,q}(\bA)\subseteq \bigcap \{\Lambda_{p-q,q}(Y^*\bA Y): Y \in \cV^\bot_1\}.$$
Suppose $Y \in \cV_1^\bot$. Then there is an one-dimensional subspace $\cL = \mathrm{span}\{ y
\}$ of $\cH$ with  $y \in \cH$ such that $Y: \cL^\bot \to \cH$ with $Y^*Y = I_{\cL^\bot}$.
Extend the map $Y$ to an unitary operator $U: \cH \to \cH$ such that $U|_{\cL^\bot} = Y$.
Then $Y^*AY$ is a compression of $U^*AU$ on $\cL^\bot$.

Now let $\bC  = (C_1,...,C_m)\in \Lambda_{p,q}(\bA) = \Lambda_{p,q}(U^*\bA U)$.
Then there exists an operator $X:\cK_1\to \cH$ with $X^*X = I_{\cK_1}$ for
some $pq$-dimensional subspace $\cK_1$ of $\cH$
such that $X^*(U^*A_jU) X = I_p \otimes C_j$, $j = 1,\dots,m$.
With respect to the compression $I_p\otimes C_j$ on $\cK_1$,
let
$$\{e_{11},\dots, e_{1q},e_{21},\dots,e_{2q},\dots,e_{p1},\dots, e_{pq}\}$$
be the corresponding basis of $\cK_1$.
Define the $q \times p$ matrix
$$Q = \begin{bmatrix}
\langle y, Xe_{11}\rangle & \dots & \langle y, Xe_{1q}\rangle \cr
\vdots & \ddots & \vdots \cr
\langle y, Xe_{p1}\rangle & \dots & \langle y, Xe_{p q}\rangle
\end{bmatrix}.$$
Note that the nullity of $Q$ is at least $k = p-q$. Then there exists a $p \times k$ matrix
$W = \left[w_{ij} \right]$ with orthonormal columns such that $W^*W = I_k$ and $Q W = 0$.
Thus,
$$\left\langle y, X\sum_{k=1}^p w_{js}  e_{jt} \right\rangle  = \sum_{k=1}^p w_{js} \langle y, Xe_{jt} \rangle = 0 \quad \hbox{for $s = 1,\dots,p$ and $t = 1,\dots,k$.}$$
Now define a $pk$-dimensional subspace $\cK_2 =\span \{e_{11},\dots, e_{1k},e_{21},\dots,e_{2q},\dots,e_{p1},\dots, e_{pk}\}$ and set 
$X_2 = X(W \otimes I_q)$ being an operator from $\cK_2$ to $\cH$, equivalently,
$$X_2: e_{st} \mapsto X\sum_{k=1}^p w_{js}  e_{jt}\quad \hbox{for $s = 1,\dots,p$ and $t = 1,\dots,k$.}$$
Then $X^*_2 X_2 = I_{\cK_2}$ and 
from the above equalities, $X_2(\cK_2)$ is orthogonal to $\{y\}$ and hence $X_2(\cK_2) \subseteq \cL^\bot$.
Therefore,
$$X_2^*(Y^*A_j Y)X_2 =X_2 ^*(U^*A_jU)X_2 = (W^*\otimes I_q) X^* U^*A_j UX (W \otimes I_q)
=   (W^* \otimes I_q) (I_p \otimes C_j) (W \otimes I_q)
= I_k \otimes C_j$$
for all $ j =1,\dots,m$. Thus, $\bC \in \Lambda_{k,q}(Y^*\bA Y)=\Lambda_{p-q,q}(Y^*\bA Y)$ and the result follows for $r = 1$.
The general case follows by induction as 
$$\bigcap_{Y_1\in\cV_k^\bot} \Lambda_{p-qk,q}(Y_1^*\bA Y_1)\subseteq \bigcap_{Y_1\in\cV_k^\bot} \bigcap_{Y_2\in\cV_1^\bot} \Lambda_{p-qk-q,q}(Y_2^*Y_1^*\bA Y_1Y_2) \subseteq \bigcap_{Y\in\cV_{k+1}^\bot} \Lambda_{p-q(k+1),q}(Y^*\bA Y) .$$
\end{proof}

\begin{corollary}\label{coro25}
Let $\bA = (A_1, \dots, A_m)\in \cS(\cH)^m$ be an $m$-tuple of self-adjoint operators in $\cB(\cH)$.
Suppose $\dim \cH \ge \left(pq\left(q^2\left(m+1\right)+1\right)-1\right)(m+1)^2$. Then
\begin{itemize}
\item[{\rm (1)}]
$\Lambda_{\tilde p,q}(\bA)$ is non-empty where $\tilde p=p(q^2(m+1)+1)$.
\item[{\rm (2)}]
 $\Lambda_{p,q}(\bA)$ is star-shaped and
$(C_1 , \dots, C_m )$ is a star-center of $\Lambda_{p,q}(\bA)$
for all $(C_1 , \dots, C_m ) \in \Lambda_{\tilde p,q}(\bA)$.
\end{itemize}
\end{corollary}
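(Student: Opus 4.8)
The plan is to combine the non-emptiness criterion of Proposition \ref{2.4}, the reduction Proposition \ref{thm27}, and the star-center criterion Theorem \ref{main0}, much in the spirit of the proof of Theorem \ref{main2}, but now routing the argument through the joint rank-$\tilde p$ numerical range with $\tilde p = p(q^2(m+1)+1)$ rather than through $\Lambda_{pq(m+2)}(\bA)$.

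First I would prove (1). By the remark following Proposition \ref{2.4}, the statement $(\lambda_1 I_q,\dots,\lambda_m I_q) \in \Lambda_{p,q}(\bA)$ is equivalent to $(\lambda_1,\dots,\lambda_m) \in \Lambda_{pq}(\bA)$; more generally $(\lambda_1 I_q,\dots,\lambda_m I_q) \in \Lambda_{\tilde p,q}(\bA)$ iff $(\lambda_1,\dots,\lambda_m)\in\Lambda_{\tilde p q}(\bA)$. Apply Proposition \ref{2.4} with $k = \tilde p q = pq(q^2(m+1)+1)$: the hypothesis $\dim\cH \ge (\tilde p q - 1)(m+1)^2 = (pq(q^2(m+1)+1)-1)(m+1)^2$ is exactly what is assumed, so $\Lambda_{\tilde p q}(\bA)$ is non-empty, hence $\Lambda_{\tilde p,q}(\bA)\ne\emptyset$. (In fact one gets diagonal points $(C_1,\dots,C_m)$ with each $C_j$ scalar, but we do not need that.)

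Next, (2). Fix $(C_1,\dots,C_m)\in\Lambda_{\tilde p,q}(\bA)$. The goal, by Theorem \ref{main0}, is to show that for every $Y\in\cV^\bot_{pq(m+1)}$ we have $(C_1,\dots,C_m)\in\Lambda_{p,q}(Y^*A_1Y,\dots,Y^*A_mY)$. The idea is a two-stage descent. Write $r = pq(m+1)$. I want to peel off, from the "$\tilde p$" level down to the "$p$" level, using Proposition \ref{thm27} to handle the multiplicity drop in the first tensor factor and Proposition \ref{2.5} to handle the ambient shrinking by $Y$. Concretely: note $\tilde p = p + pq^2(m+1) = p + qr$ (since $r = pq(m+1)$ gives $qr = pq^2(m+1)$). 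So Proposition \ref{thm27}, applied with the roles $p \rightsquigarrow \tilde p$ and the reduction amount $qr$ — i.e. $\Lambda_{\tilde p,q}(\bA)\subseteq\bigcap\{\Lambda_{\tilde p - qr,\,q}(Z^*\bA Z): Z\in\cV^\bot_r\} = \bigcap\{\Lambda_{p,q}(Z^*\bA Z):Z\in\cV^\bot_r\}$ — immediately yields $(C_1,\dots,C_m)\in\Lambda_{p,q}(Y^*A_1Y,\dots,Y^*A_mY)$ for the given $Y\in\cV^\bot_r = \cV^\bot_{pq(m+1)}$. Then Theorem \ref{main0} applies verbatim and tells us $(C_1,\dots,C_m)$ is a star-center of $\Lambda_{p,q}(\bA)$; in particular $\Lambda_{p,q}(\bA)$ is star-shaped.

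The main thing to check carefully — and where I expect the only real friction — is the bookkeeping of the indices: that $\tilde p - qr = p$ with $r = pq(m+1)$, that the hypothesis $1 \le qr < \tilde p \le \dim\cH$ required by Proposition \ref{thm27} holds (the inequality $qr < \tilde p$ is $pq^2(m+1) < p(q^2(m+1)+1)$, which is clear, and $\tilde p\le\dim\cH$ follows from the dimension bound a fortiori), and that the dimension bound in the hypothesis is precisely $(\tilde p q - 1)(m+1)^2$ so that Proposition \ref{2.4} fires at level $k = \tilde p q$. Given the machinery already developed, once these numerics are lined up the proof is a short assembly, exactly parallel to the proof of Theorem \ref{main2} but with $\tilde p$ in place of $pq(m+2)$ wherever the scalar-multiplicity argument was used, and with Proposition \ref{thm27} supplying the step that Proposition \ref{2.5} alone could not (namely, reducing the multiplicity $p$ rather than just the ambient dimension). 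Note that since (2) already gives a full matrix star-center $(C_1,\dots,C_m)$, no separate affine-image statement is needed, though one could append it as in Theorem \ref{main2}(3) if desired.
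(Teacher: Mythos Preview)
Your proposal is correct and follows essentially the same approach as the paper: use Proposition~\ref{2.4} at level $k=\tilde p q$ for (1), then apply Proposition~\ref{thm27} with $r=pq(m+1)$ so that $\tilde p - qr = p$, and conclude via Theorem~\ref{main0} for (2). One small remark: your mention of a ``two-stage descent'' invoking Proposition~\ref{2.5} is a red herring---as your own argument shows, a single application of Proposition~\ref{thm27} suffices, and Proposition~\ref{2.5} is not used here.
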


\begin{proof}
Recall that $(\lambda_1 I_q,...,\lambda_m I_q)\in \Lambda_{\tilde p,q}(\bA)$ if and only if $(\lambda_1,...,\lambda_m)\in\Lambda_{\tilde pq}(\bA)$. Then (1) follows from Proposition~\ref{2.4}.

On the other hand, by Theorem~\ref{thm27}, we have
$$
\Lambda_{\tilde p,q}(\bA) \subseteq \bigcap \left\{ \Lambda_{\tilde p-pq^2(m+1),q}(Y^*\bA Y):Y\in\cV^\bot_{pq(m+1)} \right\}\\
= \bigcap \{ \Lambda_{p,q}(Y^*\bA Y): Y \in \cV^\bot_{pq(m+1)}\}.
$$
Then (2) follows by Theorem~\ref{main0}.
\end{proof}

\medskip
As discussed at the end of Section 1, $\Lambda_{p,q}(A_1,\dots,A_m)$ can be identified with $\Lambda_{p,q}(H_1,\dots,H_m, G_1,\dots, G_m)$,
where $A_j = H_j + i G_j$ for $H_j,G_j \in \cS(\cH)$. Therefore, most of the results in this section, including Theorem \ref{main0}, Corollary  \ref{main}, and Proposition \ref{thm27}, actually hold if one replaces $\bA = (A_1,\dots, A_m) \in \cS(\cH)^m$ by $\bA = (A_1,\dots, A_m) \in \cB(\cH)^m$,
while some minor modifications are required for Theorem \ref{main2} and Corollary \ref{coro25}, which are restated as follows.

\begin{theorem} \label{main5}
Let $\bA = (A_1, \dots, A_m)\in \cB(\cH)^m$ be an $m$-tuple of bounded linear operators.
\begin{itemize}
\item[{\rm (1)}] Set $p_1 = 2pq(m+1)$.
If $\dim \cH \ge (p_1-1)(2m+1)^2$, then $\Lambda_{p_1}(\bA)$ is non-empty
and $\Lambda_{p,q}(\bA)$ is star-shaped and
$(c_1 I_q, \dots, c_m I_q)$ is a star-center of $\Lambda_{p,q}(\bA)$
for all $(c_1,\dots,c_m) \! \in \!\Lambda_{p_1}(\bA)$.

\item[\rm (2)] Set $p_2 = p(q^2(2m+1)+1)$.
If $\dim \cH \ge \left(p_2q-1\right)(2m+1)^2$, then
$\Lambda_{p_2,q}(\bA)$ is non-empty 
and $\Lambda_{p,q}(\bA)$ is star-shaped and
$(C_1 , \dots, C_m )$ is a star-center of $\Lambda_{p,q}(\bA)$
for all $(C_1 , \dots, C_m ) \in \Lambda_{p_2,q}(\bA)$.
\end{itemize}
\end{theorem}

\section{Convexity of $(\infty,q)$-matricial range}

In this section, {\it we always assume that $\cH$ has infinite-dimension}.
We then extend the definition of  $\Lambda_{p,q}(A_1, \dots, A_m)$ to
$\Lambda_{\infty,q}(A_1, \dots, A_m)$ consisting of $(B_1, \dots, B_m) \in \bM_q^m$ such
that
$$I_\infty \otimes B_j = B_j \oplus B_j \oplus B_j \oplus \cdots$$
is a compression of $A_j$
on an infinite dimensional closed subspace $\cK$ of $\cH$.
We will show that $\Lambda_{\infty,q}(\bA)$ is always convex.
To prove this,  we need some related concepts and auxiliary results.
Denote by $\cF(\cH)$  the set of all finite rank operators in $\cB(\cH)$
and $\cS_\cF(\cH) = \cS(\cH) \cap \cF(\cH)$.
Let $\cV_{\infty}$ be the set of operator $X: \cK \to \cH$
such that $X^*X = I_{\cK}$
for an infinite dimensional subspace $\cK$ of $\cH$.
Also define $\cV^\bot = \bigcup_{r\ge 1} \cV^\bot_r.$

It is clear that $\Lambda_{\infty,q}(\bA) \subseteq \Lambda_{p,q}(\bA)$ for all $p\ge 1$.
The following result is a consequence of Corollary~\ref{coro25}.

\begin{proposition}\label{prop31}
Let $\bA = (A_1, \dots, A_m)\in \cS(\cH)^m$ be an $m$-tuple of self-adjoint operators in $\cB(\cH)$.
Then
$\Lambda_{p,q}(\bA)$ is always star-shaped for all positive integers $p,q$. Moreover,
if
$\bC \in \Lambda_{\infty,q}(\bA)$, then $\bC$ is a star-center of
$\Lambda_{p,q}(\bA)$ for any positive integer $p$.
\end{proposition}


\begin{theorem}\label{thm33}
Let $\bA = (A_1, \dots, A_m)\in \cS(\cH)^m$ be an $m$-tuple of self-adjoint operators in $\cB(\cH)$ 
and $p_0$ positive integer.
Denote by $S_{p,q}(\bA)$ the set of star-centers of $\Lambda_{p,q}(\bA)$. Then
$$
\Lambda_{\infty,q}(\bA)
= \bigcap_{p\ge 1} S_{p,q}(\bA)
= \bigcap_{p\ge 1} \Lambda_{p,q}(\bA)  
= \bigcap \{ \Lambda_{p_0,q}(Y^*\bA Y): Y \in \cV^\bot\} 
= \bigcap \{ \Lambda_{p_0,q}(\bA + \bF): \bF \in \cS_\cF(\cH)^m\}
$$
where  $Y^*\bA Y = (Y^*A_1Y, \dots, Y^*A_m Y)$
and $\bA + \bF = (A_1+F_1, \dots, A_m + F_m)$.
Consequently, $\Lambda_{\infty,q}(\bA)$ is convex.
\end{theorem}

Note that even though $\Lambda_{p,q}(\bA) \ne \emptyset$ for every positive integer $p$,
it is possible that $\Lambda_{\infty,q}(\bA) = \emptyset$; see
\cite[Example 4.7]{LP2}. In any event,
$\Lambda_{\infty,q}(\bA)$ can be constructed by the joint $q$-matricial range when $p_0 =1$ in Theorem \ref{thm33}.

\begin{corollary}\label{cor33}
Let $\bA  = (A_1, \dots, A_m) \in \cS(\cH)^m$ be an $m$-tuple of self-adjoint operators in $\cB(\cH)$. Then
\begin{eqnarray*}
\Lambda_{\infty,q}(\bA)
= \bigcap \{ W(q: Y^*\bA Y): Y \in \cV^\bot\}
= \bigcap \{ W(q:(\bA + \bF)): \bF \in \cS_\cF(\cH)^m\}.
\end{eqnarray*}
\end{corollary}

We divide the proof of Theorem \ref{thm33} into several lemmas, which are of independent interest.

\begin{lemma}\label{lem34}
Let $\bA = (A_1,\dots, A_m) \in \cS(\cH)^m$ be an $m$-tuple of self-adjoint operators in $\cB(\cH)$. Then
$$ \bigcap_{Y \in \cV^\bot} \Lambda_{p,q}(Y^*\bA Y)
\subseteq \bigcap_{Y \in \cV^\bot} W(q:Y^*\bA Y)
\subseteq \Lambda_{\infty,q}(\bA).$$
\end{lemma}

\begin{proof}
The first inclusion is clear. For the second inclusion, 
given $\bC=(C_1,...,C_m)\in \bigcap_{Y \in \cV^\bot} W(q: Y^*\bA Y)$, 
we claim that there exist an infinite sequence of operators $\{X_r\}_{r=1}^\infty \subseteq  \cV_q$
with $X_r:\cK_r \to \cH$ for some $q$-dimensional subspace $\cK_r$ of $\cH$,
such that any two distinct subspaces $\cK_r$  and $\cK_s$ are orthogonal and 
$$
X_r^* X_s = \begin{cases}
I_q & r = s, \\
0_q & r \ne s,
\end{cases}
\quad\hbox{and}\quad
X_r^* A_j X_s = \begin{cases}
C_j & r = s, \\
0_q & r \ne s.
\end{cases}$$
Once the claim holds, since $\{\cK_r\}_{r=1}^\infty$ is an infinite sequence of distinct orthogonal $q$-dimensional subspaces of $\cH$,
one can extend $\{X_r\}_{r=1}^\infty$ to an unitary operator $U:\cH \to \cH$ such that $U|_{\cK_r} = X_r$ for all $r$.
Then the operator matrix of $U^*A_j U$ with respect to the decomposition $\cH = \cK_1 \oplus \cK_2 \oplus \cK_3 \oplus \cdots$ has the form
$$\begin{bmatrix}
C_j & 0 & 0 & \cdots & \cr
0   &  C_j & 0 & \cdots  \cr
0 & 0 &  C_j  & \cdots \cr
\vdots & \vdots & \vdots & \ddots 
\end{bmatrix}.$$
Thus, $\bC \in \Lambda_{\infty,q}(\bA)$.
Now it remains to prove the claim,
which will be done by induction.

\medskip
Assume $\bC=(C_1,...,C_m)\in \bigcap_{Y \in \cV^\bot} W(q: Y^*\bA Y)$.
Then $\bC \in W(q:\bA) = \Lambda_{1,q}(\bA)$ and
there exists $X_1 \in \cV_{q}$ such that $X_1: \cK_1 \to \cH$ with $X_1^*X_1 = I_{\cK_1}$
for some $q$-dimensional subspace $\cK_1$ of $\cH$
so that $X_1^* A_j X_1 =  C_j$ for $j =1,\dots,m$.
The claim holds for $\{X_1\}$. 

Assume the operators $\{X_1,\dots,X_n\}$ already satisfy the claim.
Then $\cK_r$ and $\cK_s$ are orthogonal for all $1\le r < s \le n$.
Since $X_r^*X_s = 0_q$ for $1\le r < s\le n$,
$X_r(\cK_r)$ is orthogonal to $X_r(\cK_s)$ for $r \ne s$.
Then one can extend $X_1,\dots,X_n$ to an unitary operator $U: \cH \to \cH$ 
such that $U|_{\cK_r} = X_r$ for $1\le r \le n$,
and the operator matrix of $U^*A_jU$ with respect to the decomposition $\cH = \left(\bigoplus_{r=1}^n \cK_r\right) \oplus \left(\bigoplus_{r=1}^n \cK_r\right)^\bot$
has the form
$$\begin{bmatrix}
I_n \otimes C_j & *  \cr
 * & *
\end{bmatrix}.$$
Let $\cL$ be the subspace spanned by
$$\left\{ \left(\oplus_{r=1}^n \cK_r\right) ,\ U^*A_1U \left(\oplus_{r=1}^n \cK_r\right),\ \dots,\ U^*A_mU\left(\oplus_{r=1}^n \cK_r\right) \right\}.$$
Then $\cL$ has dimension at most $qr(m+1)$. Take $Y = U|_{\cL^\bot}$. Then $Y\in \cV^\bot$.
By the above assumption, $\bC \in W(q:Y^* \bA Y)$ and there exists
$X: \cK_{n+1} \to \cL^\bot$ with $X^*X = I_{\cK_{n+1}}$
for some $q$-dimensional subspace $\cK_n$ of $\cL^\bot$
such that 
$$X^*(Y^*A_j Y) X = C_j,\quad\hbox{$j =1,\dots,m$}.$$
Define $X_{n+1} = YX: \cK_{n+1} \to \cH$. Clearly, $X_{n+1}^*X_{n+1} = I_q$
and $X_{n+1}^*A_j X_{n+1} = C_j$ for all $j =1,\dots,m$.
Now fixed $1\le r \le n$.  For any $u\in \cK_{n+1}$  and $v \in \cK_{r}$, 
$Xu \in \cL^\bot$ and $U^*A_j U v \in \cL$. Then
$$\langle  u,  X_{n+1}^*X_r v\rangle
= \langle  X_{n+1}u, X_r v\rangle
= \langle  YXu, U v\rangle
= \langle  UXu, U v\rangle
= \langle  Xu,  v\rangle = 0$$
and
$$\langle  u,  X_{n+1}^*A_j X_r v\rangle
= \langle  X_{n+1} u,   A_j X_r v\rangle
= \langle  UX u,   A_j U v\rangle
= \langle  X u,   U^*A_j U v\rangle
= 0.$$
Thus, $X_{n+1} X_r = 0$ and $X_{n+1}A_j X_r = 0$ for all $1\le j \le m$ and $1\le r \le n$.
Thus, the operators $\{X_1,\dots,X_{n+1}\}$ satisfy the claim.
By induction, the claim holds in general.
\end{proof}

\begin{lemma}\label{lem33}
Let $\bA = (A_1,\dots, A_m) \in \cB(\cH)^m$ be an $m$-tuple of self-adjoint operators in $\cB(\cH)$.
\begin{enumerate}
\item[\rm (a)] For any $\bF\in \cS_\cF(\cH)^m$, there exists $Y \in \cV^\bot$ such that
$\Lambda_{p,q}(Y^* \bA Y  ) \subseteq \Lambda_{p,q}(\bA + \bF  )$.
\item[\rm (b)] For any $Y \in \cV^\bot$, there exists $\bF\in \cS_\cF(\cH)^m$ such that
$\Lambda_{p,q}(\bA + \bF  ) \subseteq \Lambda_{p,q}(Y^* \bA Y  )$.
\end{enumerate}
Consequently,
$$\bigcap_{\bF \in \cS_\cF(\cH)^m} \Lambda_{p,q}(\bA + \bF  )
=\bigcap_{ Y \in \cV^\bot} \Lambda_{p,q}(Y^* \bA Y ).$$
\end{lemma}

\begin{proof}
(a) For any $\bF \in \cS_\cF(\cH)^m$, there exists $Y \in \cV^\bot$ such that $Y^*\bF Y   = \b0 = (0,\dots, 0)$. Then
$$\Lambda_{p,q}(Y^*\bA Y)
= \Lambda_{p,q}(Y^*(\bA+\bF) Y)
\subseteq \Lambda_{p,q}(\bA+\bF).$$

\medskip\noindent
(b) Suppose $Y\in \cV^\bot_r$ for some $r$.
Then $Y: \cL_1^\bot \to \cH$ such that $Y^*Y = I_{\cL_1^\bot}$
for some $r$-dimensional subspace $\cL_1$ of $\cH$.
Since $\cL^\bot_1$ is infinite dimensional, by Theorem~\ref{main2},
the joint rank $pq$-numerical range
$\Lambda_{pq}(Y^*\bA Y)$ is non-empty.
Take an element $(b_1,\dots,b_m) \in \Lambda_{pq}(Y^* \bA Y)$.
Then there exists a $pq$-dimensional subspace $\cK_1$ of $\cL_1^\bot$ and $X: \cK_1 \to \cL_1^\bot$
with $X^*X = I_{\cK_1}$ such that $X^*(Y^*A_j Y)X = b_j I_{pq}$ for $j =1,\dots,m$.
Extend the operator $YX: \cK_1 \to \cH$ to an unitary operator $U: \cH \to \cH$ so that $U|_{\cK_1} = YX$.
Let $\cL_2$ be the subspace spanned by 
$$\{\cL_1, \cK_1,  U^*A_1U(\cK_1), \dots, U^*A_m U (\cK_1) \}.$$
Then $\cL_2$ has dimension at most $pqm+r$. Set $W = U|_{\cL_2^\bot}$.
Then the operator matrix of $U^*A_jU$ with respect to the decomposition $\cH = \cK_1 \oplus \cL_2^\bot\oplus (\cK_1 \oplus \cL_2^\bot)^\bot$ has the form
$$
\begin{bmatrix}
b_j I_{\cK_1} & 0 & * \cr
0 & W^*A_j W & * \cr
* & * & *
\end{bmatrix}.$$
Let $B_j$ be the self-adjoint operator such that the operator matrix of $U^*B_jU$ with respect to the same decomposition  $\cH = \cK_1 \oplus \cL_2^\bot\oplus (\cK_1 \oplus \cL_2^\bot)^\bot$ has the form
$$\begin{bmatrix}
b_j I_{\cK_1} & 0 & 0 \cr
0 & W^*A_j W & 0 \cr
0 & 0 & b_j I_{(\cK_1 \oplus \cL_2^\bot)^\bot}
\end{bmatrix}\quad\hbox{and}\quad
F_j = B_j - A_j.$$ 
Notice that $\cK_1\oplus (\cK_1 \oplus \cL_2^\bot)^\bot = \cL_2$ is finite dimensional and
$U^*F_j U$ has the form
$\begin{bmatrix}
0 & 0 & * \cr
0 & 0 & * \cr
* & * & *
\end{bmatrix}$.
Thus, $F_j$ is a finite rank operator.
Now 
denote $\bF = (F_1,\dots,F_m)\in \cS_\cF(\cH)^m$ and suppose  
$$\bC = (C_1,...,C_m)\in \Lambda_{p,q}(\bA + \bF) = \Lambda_{p,q}(\bB) = \Lambda_{p,q}(U^*\bB U).$$
Then there exists $Z: \cK\to \cH$ with $Z^*Z = I_{\cK}$
for some $pq$-dimensional subspace $\cK$ of $\cH$
such that
$Z^*(U^*B_j U) Z = I_p \otimes C_j$ for $ j =1,\dots,m$.
Write $Z = \begin{bmatrix} Z_1 \\  Z_2\\ Z_3 \\  \end{bmatrix}$
according to the same decomposition $\cH = \cK_1 \oplus \cL_2^\bot\oplus (\cK_1 \oplus \cL_2^\bot)^\bot$. Then
$$b_j Z_1^*Z_1 + Z_2^* (W^* A_jW) Z_2 + b_j Z_3^*Z_3 = Z^*(U^*B_jU) Z = I_p \otimes C_j.$$
Since $\dim \cK = pq=\dim \cK_1$, one can always find an operator $\hat Z_1$ such that
$\hat Z_1^*\hat Z_1 = Z_1^*Z_1  + Z_3^*Z_3$. Define
$\hat Z: \cK \to \cK_1 \oplus \cL_2^\bot$ by $\hat Z = \begin{bmatrix} \hat Z_1 \\ Z_2 \end{bmatrix}$
with respect to the decomposition $ \cK_1 \oplus \cL_2^\bot$.
Then $\hat Z^*\hat Z = I_{\cK}$ and hence $\hat Z \in \cV_{pq}$.
Furthermore,
\begin{eqnarray*}
\hat Z^*
\begin{bmatrix}
b_j I_{\cK_1} & 0  \cr
0 & W^*A_j W 
\end{bmatrix} \hat Z 
&=& b_j \hat Z_1^* \hat Z_1 + Z_2^*(W^* A_jW) Z_2 \\[1mm]
&=& b_j (  Z_1^*Z_1 + Z_3^*Z_3)  + Z_2^*(W^* A_jW) Z_2
= Z^*(U^*B_jU) Z = I_p \otimes C_j.
\end{eqnarray*}
Recall that $\cK_1 \subseteq \cL_1^\bot$ and $\cL_2^\bot \subseteq \cL_1^\bot$, and hence $\cK_1 \oplus \cL_2^\bot \subseteq \cL_1^\bot$.  Thus,
the operator $b_j I_{\cK_1} \oplus W^*A_j W$ is a compression of $Y^*A_j Y$ on $\cK_1 \oplus \cL_2^\bot$.
Thus, 
$$\bC \in \Lambda_{p,q}\left(
b_1 I_{\cK_1} \oplus W^*A_1 W, \dots,
b_m I_{\cK_1} \oplus W^*A_m W \right) \subseteq
\Lambda_{p,q}(Y^*\bA Y).$$
Hence, the proof is complete.
\end{proof}

\noindent
{\it Proof of Theorem \ref{thm33}.}
The last equality follows by Lemma \ref{lem33}. Now
by Proposition \ref{prop31}, Proposition \ref{thm27} and Lemma \ref{lem34},
\begin{eqnarray*}
\Lambda_{\infty,q}(\bA)
 &\subseteq &  \bigcap_{p\ge 1} S_{p,q}(\bA)
\subseteq  \bigcap_{p\ge 1} \Lambda_{p,q}(\bA)
\subseteq \bigcap_{p\ge 1} \Lambda_{p_0+pq,q} (\bA) \cr
&\subseteq& \bigcap_{p\ge 1} \left( \bigcap_{Y\in \cV^\bot_p} \Lambda_{p_0,q}(Y^*\bA Y)\right)
= \bigcap \{ \Lambda_{p_0,q}(Y^*\bA Y): Y \in \cV^\bot\} \subseteq \Lambda_{\infty,q}(\bA).
\end{eqnarray*}
Thus, the result follows. Note that $S_{p,q}(\bA)$ is convex for all positive integer $p$. Hence the last assertion follows.
\qed

The following fact can be easily deduced from Theorem \ref{thm33}.
\begin{corollary}\label{cor36}
Let $\bA = (A_1,\dots, A_m) \in \cS(\cH)^m$ be an $m$-tuple of self-adjoint operators in $\cB(\cH)$.
For any $Y_0 \in \cV^\bot$ and $\bF_0 \in \cS_\cF(\cH)^m$,
$$
\Lambda_{\infty,q}(\bA)
= \Lambda_{\infty,q}(Y_0^*\bA Y_0)
= \Lambda_{\infty,q}(\bA+ \bF_0).
$$
\end{corollary}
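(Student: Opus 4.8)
The plan is to obtain both identities as quick consequences of Theorem~\ref{thm33} (equivalently Corollary~\ref{cor33}), which realises $\Lambda_{\infty,q}$ as an intersection either over finite-rank self-adjoint perturbations or over compressions $Y^*\bA Y$ with $Y\in\cV^\bot$. I would treat the two equalities separately.

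For $\Lambda_{\infty,q}(\bA)=\Lambda_{\infty,q}(\bA+\bF_0)$: since $\bF_0\in\cS_\cF(\cH)^m$, the tuple $\bA+\bF_0$ again lies in $\cS(\cH)^m$, so Theorem~\ref{thm33} applies to it and, fixing any $p_0$, gives $\Lambda_{\infty,q}(\bA+\bF_0)=\bigcap\{\Lambda_{p_0,q}(\bA+\bF_0+\bF):\bF\in\cS_\cF(\cH)^m\}$. Because $\bF_0$ is itself a finite-rank self-adjoint $m$-tuple, $\bF\mapsto\bF_0+\bF$ is a bijection of $\cS_\cF(\cH)^m$ onto itself; hence $\{\bA+\bF_0+\bF:\bF\in\cS_\cF(\cH)^m\}=\{\bA+\bF:\bF\in\cS_\cF(\cH)^m\}$, and the displayed intersection equals $\bigcap\{\Lambda_{p_0,q}(\bA+\bF):\bF\in\cS_\cF(\cH)^m\}=\Lambda_{\infty,q}(\bA)$ by Theorem~\ref{thm33} once more.

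For $\Lambda_{\infty,q}(\bA)=\Lambda_{\infty,q}(Y_0^*\bA Y_0)$: write $Y_0\in\cV^\bot_r$, so that $Y_0:\cL_1^\bot\to\cH$ is an isometry with $\cL_1$ an $r$-dimensional subspace of $\cH$ and $\cH_0:=\cL_1^\bot$ infinite-dimensional; set $\bB:=Y_0^*\bA Y_0\in\cS(\cH_0)^m$. The inclusion $\Lambda_{\infty,q}(\bB)\subseteq\Lambda_{\infty,q}(\bA)$ is immediate from the definition: if $X^*X=I$ and $X^*(Y_0^*A_jY_0)X=I_\infty\otimes C_j$ for all $j$, then $Y_0X$ satisfies $(Y_0X)^*(Y_0X)=I$ and $(Y_0X)^*A_j(Y_0X)=I_\infty\otimes C_j$; this is the $\Lambda_{\infty,q}$-analogue of the compression monotonicity listed as property~2 in Section~1. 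For the reverse inclusion I would apply Corollary~\ref{cor33} both to $\bA$ over $\cH$ and to $\bB$ over $\cH_0$, and use the observation that if $Z$ belongs to the set $\cV^\bot$ formed relative to $\cH_0$, with defect $s$ say, then $Y_0Z$ belongs to $\cV^\bot_{r+s}$ relative to $\cH$ and $(Y_0Z)^*\bA(Y_0Z)=Z^*\bB Z$. Thus every set $W(q:Z^*\bB Z)$ occurring in the intersection $\bigcap_Z W(q:Z^*\bB Z)=\Lambda_{\infty,q}(\bB)$ also occurs in the intersection $\bigcap_Y W(q:Y^*\bA Y)=\Lambda_{\infty,q}(\bA)$; intersecting over the larger family gives the smaller set, so $\Lambda_{\infty,q}(\bA)\subseteq\Lambda_{\infty,q}(\bB)$, and equality follows.

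No genuine obstacle is expected: the content is entirely in the earlier results. The only points needing care are the bookkeeping of $\cV^\bot$ relative to the smaller Hilbert space $\cH_0=\cL_1^\bot$ (in particular that the composition $Y_0Z$ of two isometries still has finite defect $r+s$), and the recognition that in the compression identity one inclusion is free and does not require Theorem~\ref{thm33}.
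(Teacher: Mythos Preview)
Your proof is correct and is precisely the kind of deduction the paper has in mind when it says the corollary ``can be easily deduced from Theorem~\ref{thm33}'': the bijection $\bF\mapsto\bF_0+\bF$ handles the finite-rank perturbation equality, and for the compression equality you combine the trivial inclusion $\Lambda_{\infty,q}(Y_0^*\bA Y_0)\subseteq\Lambda_{\infty,q}(\bA)$ with the observation that the intersection defining $\Lambda_{\infty,q}(Y_0^*\bA Y_0)$ runs over a subfamily of the one defining $\Lambda_{\infty,q}(\bA)$. The bookkeeping you flag (that $Y_0Z\in\cV^\bot_{r+s}$ relative to $\cH$ and that Theorem~\ref{thm33} applies on the infinite-dimensional space $\cH_0$) is exactly right.
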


\medskip
Similar to Section 2, all the results in this section still hold if one replaces
$\cS(\cH)$ and $\cS_\cF(\cH)$ by $\cB(\cH)$ and $\cF(\cH)$ respectively.

\section{Essential Matricial Range}

In this section, we will continue to assume that
$\cH$ is of infinite dimension. Define
the joint essential $(p,q)$-matricial range of $\bA=(A_1,\dots,A_m)$ by
$$
\Lambda^{ess}_{p,q}(\bA) =
\bigcap \{ \cl(\Lambda_{p,q}(A_1+F_1, \dots, A_m+F_m)): \ F_1, \dots, F_m\in \cB(\cH)  \hbox{ are finite rank operators} \}.
$$
We will show that
$\Lambda^{ess}_{p,q}(\bA)$
is the same as  the essential $q$-matricial range
$$ W_{ess}(q:\bA) =
\bigcap\{ \cl\left(W(q:(A_1 + F_1, \dots, A_m + F_m))\right) : \ F_1, \dots, F_m\in \cB(\cH)
 \hbox{ are finite rank operators} \},
$$
which is a  non-empty compact convex set. Consequently, the sets
$\Lambda_{p,q}^{ess}(\bA)$ are the same for all positive integer $p$.
Note that the above definitions $\Lambda_{p,q}^{ess}(\bA)$ and  $W_{ess}(q:\bA)$
are slightly different from those given in the introduction.
We will show that in the definitions, one can replace
the $m$-tuple $(F_1, \dots, F_m)$ of finite rank operators
by $m$-tuples of compact operators after we obtain
many equivalent representations of the sets
$\Lambda_{p,q}^{ess}(\bA)$ and $W_{ess}(q:\bA)$
defined above.

\medskip
Observe that $W_{ess}(1: \bA) = W_{ess}(\bA)$,  the joint essential numerical range.
It was shown in \cite{LP1} that $W_{ess}(\bA)$ is always convex,
and one can use $m$-tuples of finite rank operators or compact operators
$(F_1, \dots, F_m)$ in the definition of $W_{ess}(\bA)$.

\begin{theorem}\label{main3}
Let $\bA\in \cS(\cH)^m$ be an $m$-tuple of self-adjoint operators in $\cB(\cH)$,
and let $p_0,p,q$ be positive integers. Then
$$\Lambda_{p,q}^{ess}(\bA) =  W_{ess}(q:\bA)$$
is a compact convex set containing $(a_1I_q, \dots, a_m I_q)$
for all $(a_1, \dots, a_m) \in  W_{ess}(\bA)$.
Consequently, the set $\Lambda_{p,q}^{ess}(\bA)$ is independent of the choice of $p$.
Moreover, if
$\tilde S_{p,q}(\bA)$ is the set
of star-centers of $\cl (\Lambda_{p,q}(\bA))$, then
$$\Lambda_{p,q}^{ess}(\bA) = \bigcap_{r\ge 1}\tilde S_{r,q}(\bA)=\bigcap_{r\ge 1} \cl \left(\Lambda_{r,q}\left(\bA\right)\right)
=\bigcap \{ \cl(\Lambda_{p_0,q}(Y^*\bA Y)): Y \in \cV^\bot\}.
$$
\end{theorem}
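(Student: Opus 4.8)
\emph{The plan} is to run the argument of Theorem~\ref{thm33} with closures inserted throughout, and to bridge the matricial and $q$-matricial versions by one technical inclusion. Write $E_{p_0}:=\bigcap\{\cl(\Lambda_{p_0,q}(Y^*\bA Y)):Y\in\cV^\bot\}$. First I would note, exactly as in the proof of Lemma~\ref{lem33} (which used only that the perturbations are finite rank), that $\Lambda_{p,q}^{ess}(\bA)=\bigcap_{\bF}\cl(\Lambda_{p,q}(\bA+\bF))=E_p$ and, with $p=1$, $W_{ess}(q:\bA)=E_1$; indeed Lemma~\ref{lem33}(a),(b) give $\cl(\Lambda_{p_0,q}(Y^*\bA Y))\subseteq\cl(\Lambda_{p_0,q}(\bA+\bF))$ for a suitable $Y$ and conversely, and one intersects. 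The same subspace bookkeeping as in Lemma~\ref{lem33} and Corollary~\ref{cor36} shows that $E_{p_0}$ and $W_{ess}(q:\cdot)$ are unchanged when $\bA$ is replaced by a compression $Y_0^*\bA Y_0$ with $Y_0\in\cV^\bot$. Finally $\Lambda_{p_0,q}(\bM)\subseteq W(q:\bM)$ for any tuple $\bM$ (restrict a realizing isometry to one $q$-block), so $\Lambda_{p_0,q}^{ess}(\bA)\subseteq W_{ess}(q:\bA)$.

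Next, copying the chain in the proof of Theorem~\ref{thm33}, I would show
$$\bigcap_{r\ge1}\tilde S_{r,q}(\bA)\ \subseteq\ \bigcap_{r\ge1}\cl(\Lambda_{r,q}(\bA))\ \subseteq\ E_{p_0}\ =\ \Lambda_{p_0,q}^{ess}(\bA).$$
The first inclusion holds because star-centers lie in the set. For the second, for each $r$ one has $\bigcap_{n}\cl(\Lambda_{n,q}(\bA))\subseteq\cl(\Lambda_{p_0+rq,q}(\bA))$, and Proposition~\ref{thm27} (with $p=p_0+rq$ and perturbation rank $r$) gives $\cl(\Lambda_{p_0+rq,q}(\bA))\subseteq\bigcap_{Y\in\cV^\bot_r}\cl(\Lambda_{p_0,q}(Y^*\bA Y))$; intersecting over $r$ and using $\cV^\bot=\bigcup_r\cV^\bot_r$ yields $\bigcap_n\cl(\Lambda_{n,q}(\bA))\subseteq E_{p_0}$. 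Consequently, once the reverse inclusion $W_{ess}(q:\bA)\subseteq\bigcap_r\tilde S_{r,q}(\bA)$ is proved, the string
$$\bigcap_r\tilde S_{r,q}(\bA)\subseteq\bigcap_r\cl(\Lambda_{r,q}(\bA))\subseteq\Lambda_{p_0,q}^{ess}(\bA)\subseteq W_{ess}(q:\bA)\subseteq\bigcap_r\tilde S_{r,q}(\bA)$$
forces all these sets to coincide; in particular $\Lambda_{p,q}^{ess}(\bA)=W_{ess}(q:\bA)$, which is independent of $p$, and $E_{p_0}=\Lambda_{p_0,q}^{ess}(\bA)$ gives the last equality in the theorem.

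So the whole statement reduces to that reverse inclusion, and it in turn reduces to the \emph{core claim}: for every self-adjoint tuple $\bB$ on an infinite-dimensional space and every $r$, $W_{ess}(q:\bB)\subseteq\cl(\Lambda_{r,q}(\bB))$. Granting it, fix $\bC\in W_{ess}(q:\bA)$ and $r$; to see $\bC$ is a star-center of $\cl(\Lambda_{r,q}(\bA))$ it suffices (by closedness) to treat $\bB\in\Lambda_{r,q}(\bA)$ and $t\in[0,1]$. Run the construction in the proof of Theorem~\ref{main0} on $\bB$ to obtain $X_1$ and $Y\in\cV^\bot_{rq(m+1)}$ with $Y^*X_1=0$ and $Y^*A_jX_1=0$; by the compression-invariance above, $\bC\in W_{ess}(q:\bA)=W_{ess}(q:Y^*\bA Y)\subseteq\cl(\Lambda_{r,q}(Y^*\bA Y))$, so there are $\bC_k\to\bC$ with $\bC_k\in\Lambda_{r,q}(Y^*\bA Y)$; the combining step of Theorem~\ref{main0}, applied verbatim to $\bB$ and $\bC_k$, puts $t\bB+(1-t)\bC_k$ in $\Lambda_{r,q}(\bA)$, and letting $k\to\infty$ gives $t\bB+(1-t)\bC\in\cl(\Lambda_{r,q}(\bA))$; a further limit handles general $\bB\in\cl(\Lambda_{r,q}(\bA))$. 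I would prove the core claim by using the standard orthonormal-sequence description of the essential range to build mutually orthogonal $q$-dimensional subspaces $\cK_1,\cK_2,\dots$ of $\cH$, with each $\cK_i$ orthogonal to $\cK_1+\cdots+\cK_{i-1}$ and to its images under $B_1,\dots,B_m$, so that the $\bB$-couplings between distinct $\cK_i$'s vanish and the compression $\bE_i$ of $\bB$ onto $\cK_i$ satisfies $\bE_i\to\bC$; then, for $N$ large and the relevant $\bE_i$ already within $\epsilon$ of $\bC$, one extracts from the finite-dimensional non-emptiness and star-shapedness results (Theorem~\ref{main2}, Proposition~\ref{2.4}) an element of $\Lambda_{r,q}(\bE_1\oplus\cdots\oplus\bE_N)\subseteq\Lambda_{r,q}(\bB)$ lying within $O(\epsilon)$ of $\bC$. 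The scalar case $\bC=(a_1I_q,\dots,a_mI_q)$ of the core claim is precisely the known equality $W_{ess}(\bA)=\bigcap_r\cl(\Lambda_{rq}(\bA))$ of \cite{LP1,LP2}. Producing an \emph{exact} compression of the form $I_r\otimes\bD$ with $\bD$ close to a prescribed essential point is where the real work lies; everything else is a recombination of Theorems~\ref{main0} and~\ref{thm33} and Lemma~\ref{lem33}.

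The topological assertions are then routine. The set $\Lambda_{p,q}^{ess}(\bA)=\bigcap_r\tilde S_{r,q}(\bA)$ is closed (an intersection of closures) and bounded (contained in $\cl(W(q:\bA))$, since $\|X^*A_jX\|\le\|A_j\|$), hence compact; it is non-empty as the decreasing intersection of the non-empty compact sets $\cl(\Lambda_{r,q}(\bA))$ (non-empty by Theorem~\ref{main2}, as $\dim\cH=\infty$); it is convex because each $\tilde S_{r,q}(\bA)$ is convex (the star-center set of any subset of a vector space is convex); and it contains $(a_1I_q,\dots,a_mI_q)$ whenever $(a_1,\dots,a_m)\in W_{ess}(\bA)$, since the orthonormal-sequence description realizes $(a_1I_q,\dots,a_mI_q)$ as a limit of compressions of $\bA$ onto $q$-dimensional subspaces escaping to infinity, so $(a_1I_q,\dots,a_mI_q)\in\bigcap_{Y\in\cV^\bot}\cl(W(q:Y^*\bA Y))=W_{ess}(q:\bA)$.
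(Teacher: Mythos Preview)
Your overall architecture is sound and close to the paper's: you correctly invoke Lemma~\ref{lem33} to pass between finite-rank perturbations and compressions, and your use of Proposition~\ref{thm27} to reach $E_{p_0}$ from $\bigcap_r\cl(\Lambda_{r,q}(\bA))$ matches the paper. The chain of inclusions you set up is valid, and the topological remarks at the end are fine.

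The genuine gap is in your ``core claim'' $W_{ess}(q:\bB)\subseteq\cl(\Lambda_{r,q}(\bB))$. You propose to build orthogonal $q$-blocks with compressions $\bE_i\to\bC$ and then ``extract from the finite-dimensional non-emptiness and star-shapedness results (Theorem~\ref{main2}, Proposition~\ref{2.4}) an element of $\Lambda_{r,q}(\bE_1\oplus\cdots\oplus\bE_N)$ lying within $O(\epsilon)$ of $\bC$.'' But Theorem~\ref{main2} and Proposition~\ref{2.4} only produce \emph{scalar} star-centers $(c_1I_q,\dots,c_mI_q)$, with the $c_j$ coming from some high-rank joint numerical range of the direct sum; there is no mechanism forcing these to be near a prescribed non-scalar $\bC$. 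You yourself flag that ``producing an exact compression of the form $I_r\otimes\bD$ with $\bD$ close to a prescribed essential point is where the real work lies,'' and indeed this step is not supplied. The paper handles exactly this point in Lemma~\ref{lem42} by invoking \emph{Tverberg's theorem}: with $d\ge(r-1)(q^2m+1)+1$ blocks $\bE_1,\dots,\bE_d$ each within $1/n$ of $\bC$, Tverberg partitions them into $r$ groups whose convex hulls share a common point $\bC^{(n)}$; that common point is then realized in $\Lambda_{r,q}$ of the direct sum (a convex combination within each group), and being a convex combination of the $\bE_i$ it is automatically within $1/n$ of $\bC$. This combinatorial step is the missing idea.

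A secondary remark: the paper closes the loop $\bigcap_r\cl(\Lambda_{r,q}(\bA))\subseteq\bigcap_r\tilde S_{r,q}(\bA)$ more directly than you do, via Corollary~\ref{coro25} (elements of $\Lambda_{\tilde p,q}$ are star-centers of $\Lambda_{p,q}$ for $\tilde p=p(q^2(m+1)+1)$) together with Lemma~\ref{star_center} (star-centers pass to closures). Your detour through re-running the Theorem~\ref{main0} construction and invoking compression-invariance of $W_{ess}(q:\cdot)$ is correct in spirit but unnecessary once Lemma~\ref{lem42} is in hand.
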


Before we present the proof of Theorem \ref{main3}, we begin with a general observation.
Let $(V,\left\|\cdot\right\|)$ be a normed space. For any star-shaped set $S\subseteq V$,
denote by $S^c$ the set of all star-centers of $S$. The following result is known. We
include a short proof here for completeness.

\begin{lemma}\label{star_center}
Let $S\subseteq V$ be star-shaped. Then
$$\cl(S^c)\subseteq (\cl(S))^c.$$
\end{lemma}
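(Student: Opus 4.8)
The plan is to show directly that every point of $\cl(S^c)$ is a star-center of $\cl(S)$. First observe that $S^c\subseteq S$, hence $\cl(S^c)\subseteq\cl(S)$; so any $s_0\in\cl(S^c)$ already lies in $\cl(S)$, and it only remains to verify the defining condition for a star-center, namely that $ts_0+(1-t)s\in\cl(S)$ for every $s\in\cl(S)$ and every $t\in[0,1]$.

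Fix such $s_0$, $s$, and $t$. Since $(V,\|\cdot\|)$ is a normed space, its closures are sequential closures, so pick sequences $(u_n)\subseteq S^c$ with $u_n\to s_0$ and $(v_k)\subseteq S$ with $v_k\to s$. For each pair $n,k$, the point $u_n$ is a star-center of $S$ and $v_k\in S$, so by definition $tu_n+(1-t)v_k\in S$.

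Now pass to the limit in two stages. Keeping $k$ fixed and letting $n\to\infty$, continuity of the vector-space operations gives $tu_n+(1-t)v_k\to ts_0+(1-t)v_k$; since each term lies in $S$, we get $ts_0+(1-t)v_k\in\cl(S)$. Then letting $k\to\infty$ we have $ts_0+(1-t)v_k\to ts_0+(1-t)s$, and because $\cl(S)$ is closed this forces $ts_0+(1-t)s\in\cl(S)$. As $s\in\cl(S)$ and $t\in[0,1]$ were arbitrary, $s_0$ is a star-center of $\cl(S)$, i.e. $s_0\in(\cl(S))^c$, which is exactly $\cl(S^c)\subseteq(\cl(S))^c$. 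There is no real obstacle in this argument; the only point requiring slight care is to carry out the two approximations in the correct order and to invoke closedness of $\cl(S)$ at the end, rather than attempting a single joint limit.
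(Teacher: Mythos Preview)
Your proof is correct and follows essentially the same sequential-limit approach as the paper. The only cosmetic difference is that the paper first reduces to showing $S^c\subseteq(\cl(S))^c$ by invoking the general fact that the set of star-centers of a closed set is closed, and then needs only a single limit, whereas you carry out both approximations explicitly; the content is the same.
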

\begin{proof}
It suffices to show that $S^c\subseteq (\cl(S))^c$ as the set of all star-centers of
closed star-shaped set is closed. Let $c\in S^c$. Suppose that $\eta\in \cl(S)$. Then
there is a sequence $\{\eta_r\}\subseteq S$ converging to $\eta$. Note that
$tc+(1-t)\eta_r\in S$ for any $0\leq t\leq 1$ and positive integer $r$.
As $\{tc+(1-t)\eta_r\}\subseteq S$ converging to $tc+(1-t)\eta$,
we have $tc+(1-t)\eta\in\cl(S)$. Hence $c\in(\cl(S))^c$.
\end{proof}

\begin{lemma}\label{lem42}
Let $\bA=(A_1,\dots,A_m)\in \cS(\cH)^m$ be an $m$-tuple of self-adjoint operators in $\cB(\cH)$,
and let $p_0,p,q$ be positive integers. Then
$$
\bigcap \{ \cl(\Lambda_{p_0,q}(Y^*\bA Y)): Y \in \cV^\bot\}\subseteq
\bigcap \{ \cl(W(q:Y^*\bA Y)): Y \in \cV^\bot\}\subseteq \cl(\Lambda_{p,q}(\bA)).$$
\end{lemma}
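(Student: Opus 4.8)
The plan is to establish the two inclusions separately. The first inclusion,
$\bigcap \{ \cl(\Lambda_{p_0,q}(Y^*\bA Y)): Y \in \cV^\bot\}\subseteq
\bigcap \{ \cl(W(q:Y^*\bA Y)): Y \in \cV^\bot\}$, should follow essentially from the definition: since $\Lambda_{p_0,q}(T_1,\dots,T_m) \subseteq W(q:(T_1,\dots,T_m))$ for any $m$-tuple (a compression realizing $I_{p_0}\otimes B_j$ restricts, on any $q$-dimensional coordinate block, to a compression realizing $B_j$ — this is the inclusion $\Lambda_{p_0,q} \subseteq \Lambda_{1,q} = W(q:\cdot)$ built from Proposition \ref{thm27} or directly), taking closures and intersecting over all $Y\in\cV^\bot$ preserves the inclusion. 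So the first inclusion is routine.

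The substance is the second inclusion, $\bigcap \{ \cl(W(q:Y^*\bA Y)): Y \in \cV^\bot\}\subseteq \cl(\Lambda_{p,q}(\bA))$. First I would observe that the intersection on the left-hand side is in fact an intersection of the sets $W(q:Y^*\bA Y)$ over a directed family: for $Y_1 \in \cV^\bot_{r_1}$ and $Y_2 \in \cV^\bot_{r_2}$ one can find $Y \in \cV^\bot$ with $W(q:Y^*\bA Y)$ contained in both $W(q:Y_1^*\bA Y_1)$ and $W(q:Y_2^*\bA Y_2)$, by compressing successively (using that $Y^*(Y^*\bA Y)Y$ is again of the form $(Y')^*\bA Y'$). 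So pick $\bC = (C_1,\dots,C_m)$ in the left-hand intersection; then $\bC \in \cl(W(q:Y^*\bA Y))$ for every $Y\in \cV^\bot$. I want to show $\bC \in \cl(\Lambda_{p,q}(\bA))$, i.e. for every $\varepsilon > 0$ there is $\bB \in \Lambda_{p,q}(\bA)$ with $\|\bB - \bC\| < \varepsilon$. The idea is to mimic the inductive block-construction used in Lemma \ref{lem34}, but only running it $p$ times instead of infinitely many, and allowing an $\varepsilon/p$-perturbation at each stage since now $\bC$ only lies in the \emph{closure} of each $W(q:Y^*\bA Y)$ rather than in the set itself. Concretely: find $X_1\in\cV_q$, $X_1:\cK_1\to\cH$, with $\|X_1^*A_jX_1 - C_j\| < \varepsilon$; extend to a unitary, let $\cL_1 = \span\{\cK_1, U^*A_1U(\cK_1),\dots,U^*A_mU(\cK_1)\}$, set $Y_1 = U|_{\cL_1^\bot}\in\cV^\bot$; then $\bC \in \cl(W(q:Y_1^*\bA Y_1))$, so pick $X_2:\cK_2\to\cL_1^\bot$ with $\|X_2^*(Y_1^*A_jY_1)X_2 - C_j\| < \varepsilon$ and, as in Lemma \ref{lem34}, the resulting $YX_2$ satisfies $(YX_2)^*X_1 = 0$ and $(YX_2)^*A_jX_1 = 0$. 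Iterating $p$ times produces mutually orthogonal $q$-dimensional subspaces $\cK_1,\dots,\cK_p$ and operators $Z_1,\dots,Z_p$ with $Z_r^*Z_s = \delta_{rs}I_q$, $Z_r^*A_jZ_s = 0$ for $r\ne s$, and $\|Z_r^*A_jZ_r - C_j\| < \varepsilon$. Set $B_j^{(r)} = Z_r^*A_jZ_r$; then $Z = Z_1\oplus\cdots\oplus Z_p$ gives $Z^*A_jZ = \diag(B_j^{(1)},\dots,B_j^{(p)})$ on a $pq$-dimensional subspace, so $(B_j^{(1)},\dots,B_j^{(p)})$-type data lies in $\Lambda_{p,q}$ of a compression — but this is block-diagonal with \emph{distinct} blocks, not $I_p\otimes C_j$, so it is not quite an element of $\Lambda_{p,q}(\bA)$.

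To fix the last point I would average: since each $B_j^{(r)}$ is within $\varepsilon$ of $C_j$, and $\Lambda_{p,q}$ of the block-diagonal operator $\diag(B_j^{(1)},\dots,B_j^{(p)})$ (which is a compression of $\bA$, hence $\Lambda_{p,q}$ of it is contained in $\Lambda_{p,q}(\bA)$ by property 2 at the end of Section 1) is a \emph{convex} set containing each $(B_1^{(r)},\dots,B_m^{(r)})$ via a coordinate projection isometry — wait, more carefully: for each $r$, choosing $X = \,$(the canonical isometry onto the $r$-th block, tensored appropriately) shows $(B_1^{(r)},\dots,B_m^{(r)})\in\Lambda_{p,q}(\diag_k B_j^{(k)})$, and then property 3 (or the convexity argument in the proof of Theorem \ref{main0}) shows the average $\frac1p\sum_r (B_1^{(r)},\dots,B_m^{(r)})$ lies in $\Lambda_{p,q}(\diag_k B_j^{(k)}) \subseteq \Lambda_{p,q}(\bA)$; here I should double-check that $\Lambda_{p,q}$ of a fixed operator is genuinely convex, which follows from the star-shapedness/averaging technique since all the relevant blocks live inside one ambient compression of dimension $p^2q$ — alternatively, re-run the Theorem \ref{main0}-style trick with the single isometry $X = \frac{1}{\sqrt p}\sum_r (\text{block-}r\text{ embedding})$, which directly yields $X^*(\diag_k A_j\text{-block})X = I_p\otimes\big(\frac1p\sum_r B_j^{(r)}\big)$. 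Then $\frac1p\sum_r (B_1^{(r)},\dots,B_m^{(r)}) \in \Lambda_{p,q}(\bA)$ and $\|\frac1p\sum_r B_j^{(r)} - C_j\| \le \frac1p\sum_r\|B_j^{(r)} - C_j\| < \varepsilon$, giving $\bC\in\cl(\Lambda_{p,q}(\bA))$.

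The main obstacle, and the place I would be most careful, is the orthogonality bookkeeping in the iterative step combined with the $\varepsilon$-slack: one must verify that the spanning subspace $\cL_r$ taken at stage $r$ genuinely kills all the cross terms $Z_{r+1}^*A_jZ_s$ for \emph{all} $s\le r$ simultaneously (not just $s=r$), which requires $\cL_r$ to contain $\oplus_{s\le r}\cK_s$ together with its images under all $U^*A_jU$ — exactly as in Lemma \ref{lem34}, and the dimension count $\le qr(m+1)$ stays finite so $\cL_r^\bot$ is still infinite-dimensional and the next stage's $W(q:Y_r^*\bA Y_r)$ is non-vacuous. The second delicate point is the final averaging producing exactly $I_p\otimes(\text{average})$ rather than something merely close to it; phrasing it via the single isometry $X=\frac{1}{\sqrt p}\sum_r(\text{embedding}_r)$ acting on the block-diagonal compression makes this exact and avoids appealing to convexity of $\Lambda_{p,q}$ of a general operator, which is itself one of the things this paper is working toward.
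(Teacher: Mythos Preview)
Your block construction (iterating the Lemma~\ref{lem34} argument finitely many times, with an $\varepsilon$-slack at each stage) is exactly what the paper does, and the orthogonality bookkeeping you worry about is handled precisely as you describe. The gap is in your final ``averaging'' step.

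With $p$ blocks of size $q$ you obtain $Z^*A_jZ=\diag(B_j^{(1)},\dots,B_j^{(p)})$ on a $pq$-dimensional space. Your proposed isometry $X=\frac{1}{\sqrt p}\sum_r E_r$ (where $E_r:\IC^q\to\IC^{pq}$ is the $r$-th block embedding) has domain $\IC^q$, so $X^*\diag(B_j^{(k)})X=\frac1p\sum_r B_j^{(r)}$ is a $q\times q$ matrix; this places the average in $\Lambda_{1,q}(\bA)=W(q:\bA)$, not in $\Lambda_{p,q}(\bA)$. There is no ``$I_p\otimes$'' here. Likewise, your earlier claim that $(B_1^{(r)},\dots,B_m^{(r)})\in\Lambda_{p,q}(\diag_k B_j^{(k)})$ is false: any $X\in\cV_{pq}$ mapping into a $pq$-dimensional space is a unitary, and unitary conjugation cannot turn $\diag(B_j^{(1)},\dots,B_j^{(p)})$ into $I_p\otimes B_j^{(r)}$ simultaneously for all $j$ unless the blocks already coincide. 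Property~3 from the end of Section~1 does not rescue this either, since it would require each $\bB^{(r)}$ to lie in $\Lambda_{p,q}$ of a summand, but each summand is only $q$-dimensional.

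The paper resolves this by constructing not $p$ but $d\ge (p-1)(q^2m+1)+1$ blocks and then invoking Tverberg's theorem: the $d$ points $\bB_1,\dots,\bB_d\in\IR^{q^2m}$ can be partitioned into $p$ groups $\cI_1,\dots,\cI_p$ whose convex hulls share a common point $\bC^{(n)}$. For each $\ell$, property~3 (with $p=1$) gives $\bC^{(n)}\in\Lambda_{1,q}\bigl(\oplus_{r\in\cI_\ell}\bB_r\bigr)$, and since the $p$ groups occupy orthogonal subspaces one can stack these $p$ realisations to get $\bC^{(n)}\in\Lambda_{p,q}(\oplus_r\bB_r)\subseteq\Lambda_{p,q}(\bA)$. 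Finally $\|C_j^{(n)}-C_j\|\le\frac1n$ since $\bC^{(n)}$ is a convex combination of points within $\frac1n$ of $\bC$. The Tverberg step is the missing idea: it is what lets you find a \emph{single} tuple achievable $p$ times simultaneously, which naive averaging over $p$ blocks cannot do.
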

\begin{proof}
The first inclusion is trivial. Let $\bC=(C_1,\dots,C_m)\in \bigcap \{ \cl(W(q:Y^*\bA Y)): Y \in \cV^\bot\}$.
Fix a positive integer $n$.
As $\bC\in \cl(W(q:\bA ))$, there exists $X_1\in \cV_q$ such that $X_1:\cK_1\to\cH$ with $X_1^*X_1=I_{\cK_1}$ so that $X_1^*A_j X_1=B^{(1)}_j$ and $\|B^{(1)}_j-C_j \|\leq \frac{1}{n}$, $j=1,...,m$. 
By a similar inductive argument in Lemma~\ref{lem34},
one can construct two infinite sequences $\{X_r\}_{r = 1}^\infty$
and $\{\cK_r\}_{r=1}^\infty$ with $X_r: \cK_r \to \cH$
for some $q$-dimensional subspace $\cK_r$ such that
any two distinct subspaces $\cK_r$ and $\cK_s$ are orthogonal,
$$
X_r^* X_s = \begin{cases}
I_q & r = s, \\
0_q & r \ne s,
\end{cases}
\quad\hbox{and}\quad
X_r^* A_j X_s = \begin{cases}
B_{j}^{(r)} & r = s, \\
0_q & r \ne s,
\end{cases}
\quad\hbox{with}\quad
\left\|B_j^{(r)} - C_j \right\| \le \frac{1}{n},$$
for $j = 1,\dots,m$.
Take $d \ge (p-1)(q^2m+1)+1$ and set $X = \oplus_{r=1}^d X_r: \oplus_{r=1}^d \cK_r \to \cH$.
Then $X^*X  = I_{\oplus_{r=1}^d \cK_r}$ and 
$$X^*A_j X = B_j^{(1)} \oplus B_j^{(2)} \oplus \cdots \oplus B_j^{(d)}
\quad\hbox{and}\quad
\left\|B_j^{(r)} - C_j \right\| \le \frac{1}{n} \quad\hbox{for } j = 1,\dots,m \hbox{ and }r = 1,\dots,d.$$
Denote $\bB_r=(B^{(r)}_1,\dots, B^{(r)}_m)$ for $r=1,\dots,d$. 
Identifying $\bB_1,\dots,\bB_d$ as $d$ points in $\mathbb{R}^{q^2m}$, then
by Tverberg's Theorem (see~\cite{Tverberg}), one can partition $\{\bB_r:r=1,\dots,d\}$ into $p$ sets 
$$\cB_1 = \{\bB_r: r \in \mathcal I_1\},\quad
\cB_2 = \{\bB_r: r \in \mathcal I_2\}, 
\quad \dots \quad
\cB_p = \{\bB_r: r \in \mathcal I_p\}$$
such that $\co( \mathcal B_1)\cap \cdots\cap \co( \mathcal B_p)\neq \emptyset$.
Pick $\bC^{(n)}=(C_1^{(n)},\dots,C_m^{(n)}) \in \co( \mathcal B_1)\cap \cdots\cap \co( \mathcal B_p)$.
Then 
$\bC^{(n)} \in \Lambda_{q} \left( \oplus_{r \in \mathcal I_\ell} \bB_j \right)$
for $\ell = 1,\dots,p$ and hence
$$
\bC^{(n)} \in \Lambda_{p,q} \left( \oplus _{t=1}^p \oplus_{r \in \mathcal I_\ell} \bB_j \right)
= \Lambda_{p,q}\left( \oplus _{t=1}^d \bB_j \right) = \Lambda_{p,q}(X^* \bA X) \subseteq \Lambda_{p,q}(\bA).
$$
Now as $C_j^{(n)}$ is a convex combination of $B_j^{(r)}$'s, $\| B_j^{(r)} - C_j \| \le \frac{1}{n}$ implies $\| C_j^{(n)} - C_j \| \le \frac{1}{n}$.
We have $\bC^{(n)}\in \Lambda_{p,q}(\bA)$ and $\left\|C_j^{(n)}-C_j\right\|<\frac{1}{n}$. Therefore, 
there exists a sequence $\{\bC^{(n)}\}_{n=1}^\infty\subseteq\Lambda_{p,q}(\bA)$ 
converging to $\bC$. Hence, $\bC\in \cl(\Lambda_{p,q}(\bA))$.
\end{proof}

\noindent
{\it Proof of Theorem \ref{main3}.} We prove the second assertion first.
First by Lemma~\ref{lem33}, one can obtain
$$\Lambda^{ess}_{p,q}(\bA)
= \bigcap \{\cl\left(\Lambda_{p,q}(\bA + \bF  )\right): \bF \in \cS_\cF(\cH)^m\}
=\bigcap \{\cl\left(\Lambda_{p,q}(Y^* \bA Y )\right): Y \in \cV^\bot\}.$$
Next by Corollary~\ref{coro25} and Lemma~\ref{star_center},
$$\bigcap_{r\ge 1} \tilde S_{r,q}(\bA)
\subseteq  \bigcap_{r\ge 1} \cl(\Lambda_{r,q}(\bA))
\subseteq \bigcap_{r\ge 1} \cl(\Lambda_{r(q^2(m+1)+1),q} (\bA))
\subseteq \bigcap_{r\ge 1}  \cl(S_{r,q}(\bA))
\subseteq \bigcap_{r\ge 1}  \tilde S_{r,q}(\bA).
$$
Now by Proposition \ref{thm27} and Lemma~\ref{lem42}, we have
\begin{multline*}
\hspace{1cm}
\bigcap_{r\ge 1} \cl(\Lambda_{r,q}(\bA))
\subseteq \bigcap_{r\ge 1} \cl(\Lambda_{p_0+rq,q} (\bA))
\subseteq \bigcap_{r\ge 1} \left( \bigcap_{Y\in \cV^\bot_r} \cl(\Lambda_{p_0,q}(Y^*\bA Y))\right)\\
= \bigcap \{ \cl(\Lambda_{p_0,q}(Y^*\bA Y)): Y \in \cV^\bot\}
\subseteq \bigcap_{r\ge 1} \cl\left(\Lambda_{r,q}(\bA)\right). \hspace{.7cm}
\end{multline*}
As $\tilde S_{r,q}(\bA)$ is a compact convex set for all positive integers $r$, its intersection is also compact and convex. Thus, we see that the chain of set equality holds in the last assertion,
and that $\Lambda_{p,q}^{ess}(\bA)$ is convex.

Note that $\Lambda_{p,q}^{ess}(\bA) =\bigcap_{r\ge 1}\tilde S_{r,q}(\bA)$,
which is independent on $p$. We see that
$$\Lambda_{p,q}^{ess}(\bA) = \Lambda_{1,q}^{ess}(\bA) = W_{ess}(q:\bA).$$
Finally, let $(a_1, \dots, a_m) \in W_{ess}(1,\bA)$. Then
$(a_1I_q, \dots, a_m I_q) \in \Lambda_{r,q}(\bA)$
for each positive integer $r$, and
$(a_1I_q, \dots, a_m I_q) \in \Lambda_{r,q}^{ess}(\bA)$.
\qed

By similar arguments as in Theorem~\ref{thm33} and Corollary~\ref{cor36}, we have the following result
giving other representations of $\Lambda_{p,q}^{ess}(\bA) = W_{ess}(q:\bA)$.

\begin{corollary} \label{cor3.9}
Let $\bA\in \cS(\cH)^m$ be an $m$-tuple of self-adjoint operators in $\cB(\cH)$. Let $p\geq 1$, $\bF_0\in \cS_\cF(\cH)^m$, and $Y_0\in\cV^\bot$. Then
$$\Lambda_{p,q}^{ess}(\bA) =   \Lambda_{p,q}^{ess}(Y_0^*\bA Y_0)
= \Lambda_{p,q}^{ess}(\bA + \bF_0) =
\bigcap \left\{\cl \left(\Lambda_{p,q}(\bA+\bF)\right):
	\bF\in\cS_\cF(\cH)^m\right\},$$
and is equal to
$$
W_{ess}(q:\bA) = W_{ess}(q:Y_0^*\bA Y_0)
	= W_{ess}(q:\bA+\bF_0)
= \bigcap \left\{\cl \left(W(q:\bA + \bF)\right):
	\bF\in\cS_\cF(\cH)^m \right\}.$$
\end{corollary}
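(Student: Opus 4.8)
The plan is to collapse the two displayed chains onto the single identity $\Lambda_{p,q}^{ess}(\bA)=\Lambda_{p,q}^{ess}(Y_0^*\bA Y_0)=\Lambda_{p,q}^{ess}(\bA+\bF_0)$ and then to read everything off the representations of $\Lambda_{p,q}^{ess}$ already obtained in Theorem~\ref{main3}. Since $Y_0\in\cV^\bot$, we have $Y_0\in\cV^\bot_{r_0}$ for some $r_0$, say $Y_0:\cK_0^\bot\to\cH$ with $Y_0^*Y_0=I_{\cK_0^\bot}$ and $\dim\cK_0=r_0$; thus $\cK_0^\bot$ has finite codimension in $\cH$, is still infinite-dimensional, and $Y_0^*\bA Y_0$ is a tuple of self-adjoint operators on $\cK_0^\bot$. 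Consequently Theorem~\ref{main3} and Lemma~\ref{lem33} apply verbatim to each of $\bA$, $Y_0^*\bA Y_0$ and $\bA+\bF_0$. In particular, for every such tuple $\bC$ one has $W_{ess}(q:\bC)=\Lambda_{1,q}^{ess}(\bC)=\Lambda_{p,q}^{ess}(\bC)$, and moreover $\bigcap\{\cl(W(q:\bA+\bF)):\bF\in\cS_\cF(\cH)^m\}=\bigcap\{\cl(\Lambda_{1,q}(\bA+\bF)):\bF\in\cS_\cF(\cH)^m\}=\Lambda_{1,q}^{ess}(\bA)$. Hence the entire $W_{ess}$-chain follows once the $\Lambda_{p,q}^{ess}$-chain is proved, and the latter reduces to the displayed identity together with $\Lambda_{p,q}^{ess}(\bA)=\bigcap\{\cl(\Lambda_{p,q}(\bA+\bF)):\bF\in\cS_\cF(\cH)^m\}$, which is recorded inside the proof of Theorem~\ref{main3}.

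The finite-rank perturbation is immediate: writing $\Lambda_{p,q}^{ess}(\bA+\bF_0)=\bigcap\{\cl(\Lambda_{p,q}(\bA+(\bF_0+\bF))):\bF\in\cS_\cF(\cH)^m\}$ and noting that $\bF\mapsto\bF_0+\bF$ is a bijection of $\cS_\cF(\cH)^m$, the family of sets being intersected is unchanged, so the intersection equals $\Lambda_{p,q}^{ess}(\bA)$.

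For the compression, set $\tilde{\bA}=Y_0^*\bA Y_0$. Since each $\tilde A_j$ is a compression of $A_j$, the second property listed at the end of Section~1 gives $\Lambda_{r,q}(\tilde{\bA})\subseteq\Lambda_{r,q}(\bA)$ for every $r\ge1$; taking closures, intersecting over $r$, and using $\Lambda_{p,q}^{ess}(\bC)=\bigcap_{r\ge1}\cl(\Lambda_{r,q}(\bC))$ from Theorem~\ref{main3} (for $\bC=\tilde{\bA}$ on $\cK_0^\bot$ and for $\bC=\bA$ on $\cH$) gives $\Lambda_{p,q}^{ess}(\tilde{\bA})\subseteq\Lambda_{p,q}^{ess}(\bA)$. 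For the reverse inclusion I would instead use the representation $\Lambda_{p,q}^{ess}(\bC)=\bigcap\{\cl(\Lambda_{p,q}(W^*\bC W)):W\in\cV^\bot\}$ from Theorem~\ref{main3}. If $Z$ is any isometry whose domain has finite codimension inside $\cK_0^\bot$, then $Y_0Z$ is again an isometry whose domain has finite codimension inside $\cH$, and $(Y_0Z)^*\bA(Y_0Z)=Z^*\tilde{\bA}Z$; therefore, restricting the intersection defining $\Lambda_{p,q}^{ess}(\bA)$ to the subfamily of operators $W$ of the form $Y_0Z$,
$$\Lambda_{p,q}^{ess}(\bA)=\bigcap_{W\in\cV^\bot}\cl\left(\Lambda_{p,q}(W^*\bA W)\right)\ \subseteq\ \bigcap_{Z}\cl\left(\Lambda_{p,q}(Z^*\tilde{\bA}Z)\right)=\Lambda_{p,q}^{ess}(\tilde{\bA}),$$
the last equality being Theorem~\ref{main3} applied on $\cK_0^\bot$. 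Combining the two inclusions gives $\Lambda_{p,q}^{ess}(\bA)=\Lambda_{p,q}^{ess}(Y_0^*\bA Y_0)$, and the proof is complete; this is the exact analogue of the argument behind Corollary~\ref{cor36} for $\Lambda_{\infty,q}$.

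The bijection step and the codimension bookkeeping are routine. The point that requires care is the ``$\subseteq$'' direction of the compression claim: one must check that Theorem~\ref{main3} genuinely applies to the compressed tuple $Y_0^*\bA Y_0$ on the infinite-dimensional space $\cK_0^\bot$, and that composing the two compressions $Y_0$ and $Z$ stays inside the class $\cV^\bot$, so that the $\cV^\bot$-representation of $\Lambda_{p,q}^{ess}(\bA)$ can be thinned to the subfamily $\{Y_0Z\}$. That is where essentially all of the content of the corollary lies.
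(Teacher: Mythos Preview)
Your proof is correct and follows essentially the same route the paper intends: the paper merely states that Corollary~\ref{cor3.9} follows ``by similar arguments as in Theorem~\ref{thm33} and Corollary~\ref{cor36}'', and what you have written is precisely those arguments spelled out---using the bijection $\bF\mapsto\bF_0+\bF$ on $\cS_\cF(\cH)^m$ for the perturbation part, and the $\cV^\bot$-representation from Theorem~\ref{main3} together with the observation $Y_0Z\in\cV^\bot$ for the compression part. One small remark: the identity $\Lambda_{p,q}^{ess}(\bA)=\bigcap\{\cl(\Lambda_{p,q}(\bA+\bF)):\bF\in\cS_\cF(\cH)^m\}$ is in fact the working \emph{definition} adopted at the start of Section~4 (later shown to agree with the compact-operator version), so you may simply cite it as such rather than as something recorded in the proof of Theorem~\ref{main3}.
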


Let $\cK(\cH)$
be the set of compact  operators
on the infinite dimensional Hilbert space $\cH$
and $\cS_\cK(\cH) = \cS(\cH) \cap \cK(\cH)$.
In the following, we show that the definitions of
$W_{ess}(q:\bA)$ and $\Lambda_{p,q}^{ess}(\bA)$ are the same
if we replace the $m$-tuples of
finite rank operators $(F_1, \dots, F_m)$ by
$m$-tuples of compact operators.

\begin{proposition} \label{thm4.1}
Let $\bA\in \cS(\cH)^m$ be an $m$-tuple of self-adjoint operators in $\cB(H)$. 
Then 
$$W_{ess}(q:\bA)= \bigcap \{ \cl (W(q:\bA + \bG): \bG \in \cS_\cK(\cH)^m\}.$$
\end{proposition}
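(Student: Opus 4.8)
The plan is to prove the two inclusions separately. One inclusion is immediate: since every finite rank operator is compact, $\cS_\cF(\cH)^m\subseteq \cS_\cK(\cH)^m$, and hence
$$\bigcap\{\cl(W(q:\bA+\bG)):\bG\in\cS_\cK(\cH)^m\}\subseteq \bigcap\{\cl(W(q:\bA+\bF)):\bF\in\cS_\cF(\cH)^m\}=W_{ess}(q:\bA).$$
So the real work is the reverse inclusion
$$W_{ess}(q:\bA)\subseteq \cl(W(q:\bA+\bG))\quad\text{for every fixed }\bG=(G_1,\dots,G_m)\in\cS_\cK(\cH)^m.$$

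\textbf{Main step.} Fix $\bG\in\cS_\cK(\cH)^m$ and $\bC=(C_1,\dots,C_m)\in W_{ess}(q:\bA)$; the goal is to produce, for each $\varepsilon>0$, an operator $X\in\cV_q$ with $\|X^*(A_j+G_j)X-C_j\|<\varepsilon$ for all $j$. The idea is a standard compact-perturbation approximation: since each $G_j$ is compact and self-adjoint, given $\varepsilon>0$ there is a finite rank self-adjoint $F_j$ with $\|G_j-F_j\|<\varepsilon/2$. Now $\bF=(F_1,\dots,F_m)\in\cS_\cF(\cH)^m$, so by Corollary~\ref{cor3.9} (or directly by Theorem~\ref{main3} and Theorem~\ref{thm33}), $W_{ess}(q:\bA)=W_{ess}(q:\bA+\bF)\subseteq\cl(W(q:\bA+\bF))$. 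Hence there is $X\in\cV_q$ with $\|X^*(A_j+F_j)X-C_j\|<\varepsilon/2$ for all $j$. But then
$$\|X^*(A_j+G_j)X-C_j\|\le \|X^*(G_j-F_j)X\|+\|X^*(A_j+F_j)X-C_j\|\le \|G_j-F_j\|+\varepsilon/2<\varepsilon,$$
using $\|X^*MX\|\le\|M\|$ for $X^*X=I$. Since $\varepsilon>0$ was arbitrary, $\bC\in\cl(W(q:\bA+\bG))$. As $\bG$ was an arbitrary element of $\cS_\cK(\cH)^m$, this gives $W_{ess}(q:\bA)\subseteq\bigcap\{\cl(W(q:\bA+\bG)):\bG\in\cS_\cK(\cH)^m\}$, completing the proof.

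\textbf{Remark on the obstacle.} The one point requiring care is the reduction used above: one must know that $W_{ess}(q:\bA)$ is insensitive to \emph{finite rank} self-adjoint perturbations, i.e.\ $W_{ess}(q:\bA)=W_{ess}(q:\bA+\bF)$ for $\bF\in\cS_\cF(\cH)^m$, and that it is contained in $\cl(W(q:\bA))$. Both facts are already available: the first is the content of Corollary~\ref{cor3.9}, and the second follows from Theorem~\ref{main3}, which identifies $W_{ess}(q:\bA)=\Lambda_{q,q}^{ess}(\bA)=\bigcap_{r\ge1}\cl(\Lambda_{r,q}(\bA))\subseteq\cl(\Lambda_{1,q}(\bA))=\cl(W(q:\bA))$. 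Thus no genuinely new machinery is needed beyond the triangle inequality and the operator-norm contraction $\|X^*MX\|\le\|M\|$ for isometric $X$; the proposition is essentially a density/approximation statement layered on top of the finite-rank results already established.
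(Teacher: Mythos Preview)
Your proof is correct. It differs from the paper's in one small but genuine way: for the nontrivial inclusion, the paper exploits the fact that a compact operator becomes arbitrarily small when compressed to a suitable co-finite dimensional subspace, i.e.\ it chooses $Y\in\cV^\bot$ with $\|Y^*G_jY\|<\varepsilon/2$ and then invokes Corollary~\ref{cor3.9} in the form $W_{ess}(q:\bA)=W_{ess}(q:Y^*\bA Y)$. You instead use the norm density of finite rank self-adjoint operators in $\cS_\cK(\cH)$, pick $\bF$ with $\|G_j-F_j\|<\varepsilon/2$, and invoke Corollary~\ref{cor3.9} in the form $W_{ess}(q:\bA)=W_{ess}(q:\bA+\bF)$. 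Both routes are standard $\varepsilon$-approximation arguments layered on the same corollary, and both finish with the same triangle-inequality estimate; neither has any real advantage over the other here.
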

\begin{proof}
Let
$$S =\bigcap \{ \cl (W(q:\bA + \bG): \bG \in \cS_\cK(\cH)^m\}.$$
Evidently, $S \subseteq W_{ess}(q:\bA)$ as $\cS_\cF(\cH) \subseteq \cS_\cK(\cH)$.
We focus on the reverse inclusion.
Suppose $\bC = (C_1, \dots, C_m) \in W_{ess}(q:\bA)$.
We will show that $\bC \in \cl(W(q: \bA+\bG))$
for any $\bG \in \cS_\cK(\cH)^m$.

Suppose $\bG = (G_1, \dots, G_m) \in \cS_\cK(\cH)^m$.
For any  given $\varepsilon > 0$, there exists $Y\in\cV^\perp$ such that
$\|Y^*G_iY\|<\varepsilon/2$ for all $1\le i\le m$.
By Corollary \ref{cor3.9},
we can find $X\in\cV_q$ such that $\|C_i-X^*Y^*A_iYX\|<\varepsilon/2$
for all $1\le i\le m$. Therefore, $\|C_i-X^*Y^*(A_i+G_i)YX\|<\varepsilon$ for
all $1\le i\le m$. Hence, $\bC \in \cl(W(q: \bA+\bG))$.
\end{proof}

\begin{proposition}
Suppose $\bA\in \cS(\cH)^m$ be an $m$-tuple of self-adjoint operators in $\cB(H)$. 
Then 
$$\Lambda^{ess}_{p,q}(\bA)=\bigcap \{ \cl (\Lambda_{p,q}(\bA + \bG): \bG \in \cS_\cK(\cH)^m\}.$$
\end{proposition}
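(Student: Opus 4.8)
The plan is to follow the pattern of Proposition \ref{thm4.1}, but with one twist: the $m$-tuples of finite-rank operators appearing in the definition of $\Lambda_{p,q}^{ess}$ do not form a family that is stable under adding a fixed compact tuple, so one cannot simply imitate that proof. Instead I would route the argument through the joint essential $q$-matricial range, for which the relevant perturbation family---compact tuples---is a real linear space. Write $S=\bigcap\{\cl(\Lambda_{p,q}(\bA+\bG)):\bG\in\cS_\cK(\cH)^m\}$; the goal is to prove $S=\Lambda_{p,q}^{ess}(\bA)$. One inclusion, $S\subseteq\Lambda_{p,q}^{ess}(\bA)$, is immediate: since $\cS_\cF(\cH)^m\subseteq\cS_\cK(\cH)^m$, the set $S$ is an intersection over a strictly larger index family than the one defining $\Lambda_{p,q}^{ess}(\bA)$, hence is contained in it.

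For the reverse inclusion I would fix $\bG\in\cS_\cK(\cH)^m$ and show $\Lambda_{p,q}^{ess}(\bA)\subseteq\cl(\Lambda_{p,q}(\bA+\bG))$. Since the zero tuple is finite rank, taking it as the perturbation in the definition of $\Lambda_{p,q}^{ess}(\bA+\bG)$ already yields $\Lambda_{p,q}^{ess}(\bA+\bG)\subseteq\cl(\Lambda_{p,q}(\bA+\bG))$, so it suffices to establish the perturbation invariance $\Lambda_{p,q}^{ess}(\bA+\bG)=\Lambda_{p,q}^{ess}(\bA)$. Because $\bA$ and $\bA+\bG$ are both $m$-tuples of self-adjoint operators, Theorem \ref{main3} gives $\Lambda_{p,q}^{ess}(\bA)=W_{ess}(q:\bA)$ and $\Lambda_{p,q}^{ess}(\bA+\bG)=W_{ess}(q:\bA+\bG)$, so the whole problem reduces to the single identity $W_{ess}(q:\bA+\bG)=W_{ess}(q:\bA)$.

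This last equality is where the linear structure is used. Applying Proposition \ref{thm4.1} to $\bA+\bG$ gives $W_{ess}(q:\bA+\bG)=\bigcap\{\cl(W(q:\bA+\bG+\bG')):\bG'\in\cS_\cK(\cH)^m\}$. Since $\cS_\cK(\cH)^m$ is a real linear space, translation by the fixed element $\bG$ is a bijection of $\cS_\cK(\cH)^m$ onto itself, so the tuples $\bG+\bG'$ range over precisely $\cS_\cK(\cH)^m$; hence this intersection is literally the same as $\bigcap\{\cl(W(q:\bA+\bG'')):\bG''\in\cS_\cK(\cH)^m\}$, which equals $W_{ess}(q:\bA)$ by Proposition \ref{thm4.1} again. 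Chaining the identities yields $\Lambda_{p,q}^{ess}(\bA)=\Lambda_{p,q}^{ess}(\bA+\bG)\subseteq\cl(\Lambda_{p,q}(\bA+\bG))$, and intersecting over all $\bG\in\cS_\cK(\cH)^m$ gives $\Lambda_{p,q}^{ess}(\bA)\subseteq S$, completing the proof. The main obstacle---and the reason a verbatim repetition of Proposition \ref{thm4.1} is not available---is that membership in $\Lambda_{p,q}$ requires the compression to have the rigid form $I_p\otimes B_j$, and this form is not preserved when the compact perturbation $\bG$ is made small by compressing to a cofinite-dimensional subspace; passing to $W_{ess}(q:\cdot)$, where that rigidity is absent and the identification $\Lambda_{p,q}^{ess}=W_{ess}(q:\cdot)$ of Theorem \ref{main3} is available, is what makes the detour work.
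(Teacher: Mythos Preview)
Your proof is correct and follows essentially the same route as the paper's: both arguments establish the easy inclusion via $\cS_\cF(\cH)\subseteq\cS_\cK(\cH)$, and for the reverse inclusion both pass through the chain $\Lambda_{p,q}^{ess}(\bA)=W_{ess}(q:\bA)=W_{ess}(q:\bA+\bG)=\Lambda_{p,q}^{ess}(\bA+\bG)\subseteq\cl(\Lambda_{p,q}(\bA+\bG))$, invoking Theorem~\ref{main3} and Proposition~\ref{thm4.1}. Your explicit justification of the middle equality via the translation-invariance of $\cS_\cK(\cH)^m$, and your closing remark on why the rigid $I_p\otimes B_j$ structure blocks a direct imitation of the Proposition~\ref{thm4.1} argument, are both apt and make the logic more transparent than the paper's terse citation.
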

\begin{proof}
Let
$$\tilde S = \bigcap \{ \cl (\Lambda_{p,q}(\bA + \bG): \bG \in \cS_\cK(\cH)^m\}.$$

Evidently, $\tilde S \subseteq \Lambda^{ess}_{p,q}(\bA)$
as $\cS_\cF(\cH) \subseteq \cS_\cK(\cH)$.
We consider the reverse inclusion.
By Corollary \ref{cor3.9} and Proposition \ref{thm4.1},
for any fixed $\bG \in \cS_\cK(\cH)^m$,
$$ \Lambda_{p,q}^{ess}(\bA)
= W_{ess}(q:\bA)
= W_{ess}(q:\bA+\bG)
= \Lambda_{p,q}^{ess}(\bA+\bG)
\subseteq \cl(\Lambda_{p,q}(\bA+\bG)).$$
Thus, $\Lambda_{p,q}^{ess}(\bA) \subseteq \bigcap \{ \cl (\Lambda_{p,q}(\bA + \bG): \bG \in \cS_\cK(\cH)^m\} = \tilde S$.
\end{proof}

\medskip
Clearly, by the above propositions, in Corollary \ref{cor3.9}
one may replace ``$\bF_0\in \cS_\cF(\cH)^m$''
by ``$\bF_0\in \cS_\cK(\cH)^m$''.
Again all the results in this section still hold if one replaces
$\cS(\cH)$, $\cS_\cF(\cH)$ and $\cS_\cK(\cH)$ by
$\cB(\cH)$, $\cF(\cH)$ and $\cK(\cH)$, respectively.

\section{Related Results}
One may use our techniques to obtain similar results in Section~2, with minor modification, on other types of generalized matricial ranges.
We list a few examples below.

\begin{enumerate}
\item The joint $(p,q)$-matricial range of an $m$-tuple of
$n\times n$
real (symmetric) matrices $\bA = (A_1, \dots, A_m) \in \bM_n(\IR)^m$ for $pq \le n$ defined by
$$\Lambda_{p,q}(\bA) = \{(B_1 \dots, B_m): X^t A_j X = I_p \otimes B_j
\hbox{ for an } n\times pq \hbox{ real matrix satisfying } X^tX = I_{pq}\}.$$

\item The joint $(p,q)$-congruence matricial range of an $m$-tuple of
$n\times n$ complex (symmetric or skew-symmetric) matrices $\bA = (A_1, \dots, A_m)\in \bM_n(\IC)^m$
for $pq \le n$ defined by
$$\Lambda^{con}_{p,q}(\bA) = \{(B_1 \dots, B_m): X^t A_j X = I_p \otimes B_j
\hbox{ for an } n\times pq \hbox{ complex matrix satisfying } X^*X = I_{pq}\}.$$

\end{enumerate}

One may also extend the results on the generalized matricial ranges mentioned above
to infinite dimensional real or complex Hilbert spaces. Furthermore, one can define similar
essential matricial ranges accordingly and consider the problems discussed in Section~4.

\section*{Acknowledgment}
The authors would like to thank the referee for her/his helpful suggestions.
Li is an honorary professor of the Shanghai University,
and an affiliate member of the Institute for Quantum Computing, University of
Waterloo; his research was supported by
the USA NSF DMS 1331021, the Simons Foundation
Grant 351047, and NNSF of China Grant 11571220.
Research of Sze was supported by a PolyU central research grant G-YBNS and a HK RGC grant PolyU 502512.
The HK RGC grant also supported the post-doctoral fellowship of Lau at the Hong Kong Polytechnic University.
Part of the work was done while Li and Poon were visiting
Institute for Quantum Computing at the University of Waterloo.
They gratefully acknowledged the support and kind hospitality of the Institute.


\end{document}